\title{{\Large \vspace{-1cm} The mapping class group action on the odd character variety is faithful}}
\author{Aliakbar Daemi and Christopher Scaduto} 
\date{}
\definecolor{mint}{HTML}{239B56}
\pgfplotsset{compat=newest}
\definecolor{greenish}{rgb}{0.01, 0.75, 0.24}
\definecolor{blueish}{rgb}{0.0, 0.72, 0.92}
\definecolor{orangeish}{rgb}{1.0, 0.55, 0.0}
\newcolumntype{Y}{>{\centering\arraybackslash}X}
\newcommand{\Z}{\mathbb{Z}}
\newcommand{\Q}{\mathbb{Q}}
\newtheorem{theorem}{Theorem}[section]
\newtheorem{prop}[theorem]{Proposition}
\newtheorem{lemma}[theorem]{Lemma}
\newtheorem{conjecture}[theorem]{Conjecture}
\newtheorem{problem}[theorem]{Problem}
\newtheorem{remark}[theorem]{Remark}
\newcommand{\Addresses}{{
 \bigskip
 \footnotesize
 Aliakbar Daemi, \textsc{Department of Mathematics, Washington University in St. Louis, One Brookings drive, Room 207A,
 St. Louis, MO 63130}\par\nopagebreak
 \textit{E-mail address}: \texttt{adaemi@wustl.edu}
 \vspace{.4cm}

Christopher Scaduto, \textsc{Department of Mathematics, University of Miami, 1365 Memorial Dr 515, Coral Gables, FL 33124}\par\nopagebreak
 \textit{E-mail address}: \texttt{cscaduto@miami.edu}
}}
\begin{document}

\maketitle

\vspace{-0.75cm}

\begin{abstract}{The odd character variety of a Riemann surface is a moduli space of $SO(3)$ representations of the fundamental group which can be interpreted as the moduli space of stable holomorphic rank $2$ bundles of odd degree and fixed determinant. This is a symplectic manifold, and there is a homomorphism from a finite extension of the mapping class group of the surface to the symplectic mapping class group of this moduli space. When the genus is at least $2$, it is shown that this homomomorphism is injective. This answers a question posed by Dostoglou and Salamon and generalizes a theorem of Smith from the genus $2$ case to arbitrary genus. A corresponding result on the faithfulness of the action on the Fukaya category of the odd character variety is also proved. The proofs use instanton Floer homology, a version of the Atiyah--Floer Conjecture, and aspects of a strategy used by Clarkson in the Heegaard Floer setting.}
\end{abstract}

\vspace{.2cm}

\section{Introduction}

Let $\Sigma_g$ be a Riemann surface of genus $g$. Consider the moduli space $M_g$ of flat connections on a non-trivial $SO(3)$-bundle $P$ over $\Sigma_g$ modulo the gauge group consisting of gauge transformations that lift to $SU(2)$. Via holonomy, $M_g$ may be identified with the space of conjugacy classes of homomorphisms from $\pi_1(\Sigma_g\setminus p)$ to $SU(2)$ which map loops around $p\in \Sigma_g$ to $-1$. Because of this description, $M_g$ is sometimes referred to as the odd character variety. By a result of Narasimhan and Seshadri \cite{narasimhan-seshadri}, $M_g$ may be identified with the moduli space of stable holomorphic rank $2$ bundles over $\Sigma_g$ with some fixed odd degree determinant. The moduli space $M_g$ is a smooth closed manifold of dimension $6g-6$ and has a natural symplectic structure \cite{atiyah-bott}. It has been extensively studied for more than half a century, see for example \cite{thaddeus-intro}.

Any diffeomorphism of $\Sigma_g$ lifts to an automorphism of the bundle $P$ and the space of all such lifts up to the action of the gauge group is a copy of $H^1(\Sigma_g;\Z/2)$. This determines an 
extension of the mapping class group $\text{Mod}(\Sigma_g)=\pi_0 \text{Diff}^+(\Sigma_g)$ of the form
\begin{equation}\label{eq:theexactsequence}
    1\to H^1(\Sigma_g;\Z/2) \to \widehat{\Gamma}_g \to \text{Mod}(\Sigma_g) \to 1.
\end{equation}
Taking pullback of flat connections on $P$ with respect to a representative of an element of $\widehat{\Gamma}_g$ determines a symplectomorphism of $M_g$. There is an induced homomomorphism
\begin{equation*}\label{eq:rhohat}
    \widehat{\rho}:\widehat{\Gamma}_g \to \pi_0 \text{Symp}(M_g)
\end{equation*}
where the target is the symplectic mapping class group of $M_g$. The main result proved here is: 

\vspace{0.2cm}

\begin{theorem}\label{thm:main}
For $g\geq 2$, the homomorphism $\widehat{\rho}$ is injective.
\end{theorem}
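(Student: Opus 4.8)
The plan is to show that for every nontrivial $\phi\in\widehat{\Gamma}_g$ the symplectomorphism $\widehat{\rho}(\phi)$ is not symplectically isotopic to the identity, which is exactly injectivity of $\widehat{\rho}$. Since $M_g$ is simply connected, $H^1(M_g;\mathbb{R})=0$, so the flux homomorphism vanishes and the identity component of $\mathrm{Symp}(M_g)$ coincides with its group of Hamiltonian diffeomorphisms; it therefore suffices to rule out that $\widehat{\rho}(\phi)$ is Hamiltonian isotopic to $\mathrm{id}_{M_g}$. If it were, its fixed-point Floer homology would agree, as a module over the quantum cohomology ring $QH^{\ast}(M_g)$, with $QH^{\ast}(M_g)$ viewed as a module over itself (and, more strongly, $\widehat{\rho}(\phi)$ would act as the identity autoequivalence of the Fukaya category of $M_g$); this uses that $M_g$ is monotone with minimal Chern number $2$, which makes the Floer theories appearing below well defined over a field. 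By the theorem of Dostoglou and Salamon, a case of the Atiyah--Floer conjecture, the fixed-point Floer homology of $\widehat{\rho}(\phi)$ is isomorphic to the instanton Floer homology $I_{\ast}(Y_{\phi})$ of the mapping torus $Y_{\phi}$ of a representative of $\phi$, equipped with the $SO(3)$-bundle $E_{\phi}$ determined by the nontrivial bundle over $\Sigma_g$ and the bundle data encoded by $\phi$; moreover $I_{\ast}(\Sigma_g\times S^1)\cong QH^{\ast}(M_g)$ as rings, and $I_{\ast}(Y_{\phi})$ carries the corresponding module structure through cobordisms fibered over surfaces. Hence Theorem~\ref{thm:main} reduces to showing that, for $\phi\neq1$, the module $I_{\ast}(Y_{\phi})$ is not isomorphic to $I_{\ast}(\Sigma_g\times S^1)$.

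Several families of elements are disposed of at once. If $\phi$ acts nontrivially on $H_1(\Sigma_g;\mathbb{Q})$, then $\widehat{\rho}(\phi)$ acts nontrivially on $H^3(M_g;\mathbb{Q})\cong H^1(\Sigma_g;\mathbb{Q})$, the identification coming from the K\"{u}nneth components of the characteristic classes of the universal $SO(3)$-bundle over $M_g\times\Sigma_g$; so such $\phi$ are already visible at the level of ordinary cohomology. If $\phi$ maps to the identity in $\mathrm{Mod}(\Sigma_g)$, then $\phi$ lies in the subgroup $H^1(\Sigma_g;\mathbb{Z}/2)$ of \eqref{eq:theexactsequence} and, being nontrivial, acts on $M_g$ as the involution $t_{\phi}$ that tensors a bundle by the associated $2$-torsion line bundle; here $Y_{\phi}=\Sigma_g\times S^1$ with $E_{\phi}$ twisted in the $S^1$-direction, the fixed locus of $t_{\phi}$ is a union of Prym varieties of dimension $g-1$, and for $g\geq3$ the rank of $HF(t_{\phi})$, bounded via a Morse--Bott spectral sequence in terms of the homology of this fixed locus, is strictly less than the total Betti number of $M_g$, whence $\widehat{\rho}(\phi)\neq1$ again. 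The genuinely remaining case is when $\phi$ maps to a nontrivial element of the Torelli group $\mathcal{I}_g\triangleleft\mathrm{Mod}(\Sigma_g)$: such $\phi$ acts trivially on $H^{\ast}(M_g)$ with every coefficient ring and, being a symplectomorphism, therefore also trivially on $QH^{\ast}(M_g)$ as a ring, so no form of ordinary or quantum cohomology can detect it and the total rank of its Floer homology does not plainly distinguish it from the identity; moreover $Y_{\phi}$ has the same homology as, but is not in general diffeomorphic to, $\Sigma_g\times S^1$.

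For this core case I would follow the strategy Clarkson developed in the Heegaard Floer setting. The idea is not to compute $I_{\ast}(Y_{\phi})$ for, say, pseudo-Anosov $\phi$, which is out of reach, but to exploit the functoriality of the Atiyah--Floer package under cutting surfaces: cutting $\Sigma_g$ along an embedded curve or arc replaces $M_g$ by a Lagrangian correspondence between moduli spaces of parabolic bundles on the pieces, and $\widehat{\rho}(\phi)$ must intertwine these correspondences with the way $\phi$ permutes subsurfaces and handlebodies, which on the three-manifold side matches the behavior of $I_{\ast}$ under torus and sphere surgeries and under the cobordism maps relating a mapping torus to the mapping tori of its subsurfaces. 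Triviality of $\widehat{\rho}(\phi)$ throughout this structure would force $\phi$ to restrict trivially to proper subsurfaces and to cap off trivially, and a Birman exact sequence analysis then reduces the claim to compact surfaces of smaller genus or with boundary, the base case $g=2$ being supplied by Smith's theorem.

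The main obstacle, which absorbs most of the technical effort, is to make the Atiyah--Floer correspondence functorial and geometric enough to support this reduction: one must establish monotonicity and unobstructedness for the parabolic moduli spaces attached to the cut surfaces, keep track of the $SO(3)$-bundles and their $w_2$-classes through every gluing and surgery, and verify that the cobordism maps involved are injective on the relevant summands, so that nontriviality descends through the induction rather than being lost. Once this framework is in place, running the same reduction at the level of $A_{\infty}$-categories and functors, rather than of their cohomology, yields the companion statement on the faithfulness of the $\widehat{\Gamma}_g$-action on the Fukaya category of $M_g$, of which Theorem~\ref{thm:main} is an immediate consequence.
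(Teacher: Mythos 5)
Your proposal has a genuine gap at its core. The case you correctly identify as ``genuinely remaining'' --- $\phi$ mapping to a nontrivial Torelli element --- is exactly the hard part, and what you offer for it is a program rather than an argument: you acknowledge that the functoriality of the Atiyah--Floer correspondence under cutting, the monotonicity of the parabolic moduli spaces, and the injectivity of the relevant cobordism maps all remain to be established, and the proposed ``Birman exact sequence analysis'' reducing to smaller genus with base case $g=2$ is not something that can be run: a pseudo-Anosov Torelli class does not restrict to proper subsurfaces, and no mechanism is given by which triviality of $\widehat{\rho}(\phi)$ would force such a restriction. Moreover, the specific route you choose for the core case --- fixed-point Floer homology of $\widehat{\rho}(\phi)$, identified via Dostoglou--Salamon with $I_\ast$ of the \emph{mapping torus} $M_\phi$ --- is precisely the approach (Smith's Floer-theoretic entropy) that the paper points out is only known to succeed for $g=2$ and remains open for $g\geq 3$; distinguishing $I_\ast(M_\phi,w_\phi)$ from $I_\ast(\Sigma_g\times S^1,w)$ for Torelli pseudo-Anosov $\phi$ is not currently within reach, and your proposal does not supply a way to do it.

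The paper's key idea, which your proposal misses, is to replace the mapping torus by a different $3$-manifold: glue two compression bodies $H_1^\circ$ and $H_2^\circ$ along $\Sigma_g\cup T$ (using $\phi$ on $\Sigma_g$ and the identity on the torus $T$) to get an admissible pair $(Y_\phi,w_\phi)$. The Atiyah--Floer theorem of \cite{dfl} for such splittings identifies $I_\ast(Y_\phi,w_\phi)_{\Sigma_g\cup T}$ with ${\rm HF}(L_1,\widehat{\rho}(\phi)(L_2))$ for two explicit, \emph{disjoint}, simply-connected Lagrangians $L_1,L_2\subset M_g$, so that ${\rm HF}(L_1,L_2)=0$ while nonvanishing of $I_\ast(Y_\phi,w_\phi)$ forces ${\rm HF}(L_1,\widehat{\rho}(\phi)(L_2))\neq 0$. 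The nonvanishing is then obtained from the Kronheimer--Mrowka criterion once $Y_\phi$ is irreducible, and irreducibility is arranged (after replacing $\phi$ by a conjugate power $\psi\phi^N\psi^{-1}$, using that the kernel one is studying is normal) via Hempel's curve-complex distance and a Casson--Gordon argument, following Clarkson. This sidesteps entirely the need to compute or compare Floer homologies of mapping tori. Your treatment of the non-Torelli elements via the action on $H^3(M_g)$ does agree with what the paper uses for the genus-$2$ hyperelliptic involution, and your flux/Hamiltonian reduction is also used in the paper; but without a working argument for the Torelli pseudo-Anosov case the proof does not close.
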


\vspace{0.2cm}

\noindent The question of whether $\widehat{\rho}$ is injective was raised in the early 1990s by Dostoglou and Salamon \cite[Remark 5.6]{dostoglou-salamon}, see also \cite{seidel-thesis}. The homomorphism $\widehat{\rho}$ was studied by Smith \cite{smith}, who showed that $\widehat{\rho}$ is injective if $g=2$; he furthermore showed, for all $g$, that $\widehat{\rho}$ does not factor through $\text{Sp}(2g,\Z)$. One may also consider the homomorphism $\widehat{\rho}$ composed with the forgetful map to $\pi_0 \text{Diff}^+(M_g)$. However, there is evidence that this homorphism has large non-trivial kernel in general, see the introductory discussions in \cite{smith,seidel-thesis}.

In \cite{smith}, Smith establishes the injectivity of $\widehat{\rho}$ for $g=2$ by first proving an equivalence between a version of the Fukaya category of the surface $\Sigma_2$ and that of the moduli space $M_2$, which may be viewed as a version of Witten's Conjecture \cite{witten-monopoles}, see also \cite[\S 1.6]{smith}. Smith then obtains the $g=2$ version of Theorem \ref{thm:main} from known results about the action of the mapping class group on Fukaya categories of Riemann surfaces. 

The methods of this paper are different than those of \cite{smith} and rely on 3-manifold topology. Nevertheless, we obtain the following result about the Fukaya category of the moduli space $M_g$ which has Theorem \ref{thm:main}, in the cases $g\geq 3$, as an immediate consequence. 

\begin{theorem}\label{thm:main-2}
    For $g\geq 3$, the action of $\widehat{\rho}$ on the monotone Fukaya category of $M_g$ is faithful. Specifically, for $\phi\in \smash{\widehat{\Gamma}_g}\setminus \{0\}$, there is a pair of simply-connected Lagrangians $L$ and 
    $L'$ in $M_g$ such that 
    the Lagrangian Floer homology groups ${\rm HF}(L,L')$ and 
    ${\rm HF}(L,\widehat{\rho}(\phi)(L'))$ are not isomorphic. 
\end{theorem}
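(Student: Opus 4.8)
The plan is to recast the Lagrangians of Theorem \ref{thm:main-2} as the ``handlebody'' Lagrangians appearing in the Atiyah--Floer picture, and to translate the non-isomorphism of Floer groups into a statement about instanton Floer homology of closed $3$-manifolds. Fix $\phi\in\widehat{\Gamma}_g\setminus\{0\}$ with image $\psi\in\text{Mod}(\Sigma_g)$. Choose an admissible pair $(Y,\mathfrak{w})$ — a closed oriented $3$-manifold with $SO(3)$-bundle data — together with a Heegaard-type splitting $Y=A\cup_{\Sigma_g}B$ along an embedded copy of $\Sigma_g$ on which $\mathfrak{w}$ restricts to the non-trivial bundle $P$; each side determines a Lagrangian $L_A,L_B\subset M_g$ of flat connections on $P$ extending over $A$, resp.\ $B$. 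A version of the Atiyah--Floer conjecture — one of the principal inputs — identifies $\mathrm{HF}(L_A,L_B)$ with the instanton Floer homology $I_*(Y,\mathfrak{w})$, and under the same identification $\widehat{\rho}(\phi)(L_B)$ becomes (up to Hamiltonian isotopy) the handlebody Lagrangian of $B$ re-glued along $\psi$ with bundle data twisted by $\phi$, so that $\mathrm{HF}(L_A,\widehat{\rho}(\phi)(L_B))\cong I_*(Y_\phi,\mathfrak{w}_\phi)$ for the correspondingly modified admissible pair. Thus Theorem \ref{thm:main-2} reduces to the assertion that, for each $\phi\neq 0$, the splitting may be chosen so that $I_*(Y,\mathfrak{w})\not\cong I_*(Y_\phi,\mathfrak{w}_\phi)$. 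Along the way one checks that $L_A,L_B$ are simply connected and monotone, hence objects of the monotone Fukaya category of $M_g$; this follows from the explicit description of handlebody Lagrangians together with the fact that $M_g$ is monotone, and the hypothesis $g\geq 3$ enters precisely in making the Atiyah--Floer identification available in the required generality.

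The core of the argument is then the $3$-manifold statement, which I would establish by a case analysis on $\phi$, following the strategy used by Clarkson in the Heegaard Floer setting. If $\psi$ acts non-trivially on $H_1(\Sigma_g;\Z)$, one selects the two systems of meridians of $A$ and $B$ — equivalently, a pair of transverse Lagrangian subspaces of $H_1(\Sigma_g;\Z)$ — so that $\psi_*$ alters the isomorphism type of $H_1(Y;\Z)$, for instance dropping $b_1$; this is possible because $\psi_*\neq\mathrm{id}$ in $\mathrm{Sp}(2g,\Z)$, and the change is detected by the rank of $I_*$ over suitable coefficients. If $\psi$ lies in the Torelli group but is non-trivial, $H_1(Y;\Z)$ need not change and one instead uses a finer description of $\psi$ — via a generating set for the Torelli group, or its action on $\pi_1(\Sigma_g)$ — to build a splitting in which $\pi_1(Y_\phi)$ differs from $\pi_1(Y)$ in a way visible to instanton Floer homology, e.g.\ through the set of irreducible flat $SU(2)$-connections together with their mod-$8$ gradings, or through a Casson-type count. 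Finally, if $0\neq\phi\in H^1(\Sigma_g;\Z/2)$, then $\psi=\mathrm{id}$ and $Y_\phi=Y$, but $\mathfrak{w}_\phi$ differs from $\mathfrak{w}$ by the image of $\phi$; choosing $A,B$ so that a curve $\gamma$ with $\phi(\gamma)\neq 0$ remains essential in $Y$, one arranges that $\mathfrak{w}$ and $\mathfrak{w}_\phi$ are non-conjugate admissible bundles on the same $Y$ with distinct instanton Floer homology, using that $I_*(Y,\cdot)$ genuinely depends on the admissible class.

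I expect the Torelli case to be the main obstacle: detecting a non-trivial Torelli element demands a careful $3$-manifold construction coupled with enough control on instanton Floer homology to force a difference, and neither first homology nor a crude representation count need distinguish the two manifolds. Overcoming this likely requires either a refined algebraic input about the Torelli group — the Johnson homomorphism together with an explicit generating family, matched to a computable family of $3$-manifolds — or a more structural argument showing directly that the map from $\widehat{\Gamma}_g$ to admissible pairs up to $I_*$-equivalence, obtained by varying the Heegaard gluing, is injective. A secondary technical hurdle is assembling the Atiyah--Floer identification in exactly the form needed: for handlebody-type Lagrangians in the monotone, odd-degree setting, with the correct behavior under re-gluing and bundle twisting, and with the simple-connectivity and monotonicity of $L_A,L_B$ verified so that the comparison takes place honestly inside the monotone Fukaya category of $M_g$.
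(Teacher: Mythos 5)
Your overall frame---translate $\mathrm{HF}(L,L')$ versus $\mathrm{HF}(L,\widehat{\rho}(\phi)(L'))$ into instanton Floer homology of two closed admissible pairs via an Atiyah--Floer theorem, then distinguish them by $3$-manifold topology in the spirit of Clarkson---is the right shape, but two of your concrete steps fail. First, the setup: there are no ``handlebody Lagrangians'' in the odd character variety $M_g$, and no admissible splitting along a single connected copy of $\Sigma_g$. If $w\subset Y$ is the $1$-manifold carrying the bundle data and $Y=A\cup_{\Sigma_g}B$ with $A,B$ compact, then $w\cap \Sigma_g=\partial(w\cap A)$ has an \emph{even} number of points, so $w\cdot\Sigma_g$ is even and the restricted bundle is the trivial-$w_2$ one; equivalently, flat connections on a bundle over a handlebody restrict to the even character variety, never to $M_g$. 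This parity obstruction is exactly why the paper splits along the disconnected surface $\Sigma_g\cup T$ using compression bodies $H_i^\circ$ (a handlebody with a neighborhood of a core circle removed), with an arc of $w$ running from $T$ to $\Sigma_g$; your $L_A,L_B$ and the identification $\mathrm{HF}(L_A,L_B)\cong I_*(Y,w)$ cannot be set up as you describe. Also, $g\geq 3$ does not enter through the Atiyah--Floer theorem: it enters through Clarkson's result that, for $g\geq 3$, a normal subgroup of $\mathrm{Mod}(\Sigma_g)$ containing no pseudo-Anosov Torelli element is trivial.

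Second, and more seriously, the Torelli case---which you yourself flag as the main obstacle---is the entire content of the theorem, and the mechanisms you suggest (comparing $\pi_1$, counts of irreducible flat connections, Casson-type invariants, or the Johnson homomorphism) are not what makes the argument close, nor is it clear they could. The paper never compares two potentially nonzero Floer groups. Instead it arranges the bundle data so that the untwisted Lagrangians $L_1,L_2$ are \emph{disjoint} (one compression body carries an extra core circle $w_1'$, forcing $B_1=-1$ on one Lagrangian and $B_1=1$ on the other), so $\mathrm{HF}(L_1,L_2)=0$ (consistently, $Y_{\mathrm{id}}$ splits off an $S^1\times S^2$ with odd pairing, killing $I_*$); faithfulness then only requires the twisted group $I_*(Y_\phi,w_\phi)$ to be \emph{nonzero}, which follows from Kronheimer--Mrowka non-vanishing once $Y_\phi$ is irreducible. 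Irreducibility is false for a general Torelli $\phi$, so one needs the subgroup formalism: the set $G$ of classes acting trivially on these Floer groups is a normal subgroup of $\widehat{\Gamma}_g$; by Clarkson's proposition it suffices to rule out pseudo-Anosov Torelli elements; and for such $\phi$ one conjugates by a suitable pseudo-Anosov $\psi$ and passes to a power so that $\eta=\psi\phi^N\psi^{-1}$ has Hempel distance $\geq 1$, giving an irreducible Heegaard splitting of $Y'_\eta$ and (by a separate lemma using the Torelli hypothesis) irreducibility of $Y_\eta$; normality of $G$ then transfers the conclusion back to $\phi$. None of these ingredients---the zero-versus-nonzero comparison, the normal subgroup reduction to pseudo-Anosov Torelli classes, the curve-complex/Hempel-distance argument, and the irreducibility lemma for $Y_\eta$---appears in your proposal, so as written it reduces the theorem to an open comparison problem rather than proving it.
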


\noindent The Lagrangian Floer groups appearing are defined as in \cite{oh}, built upon the work of Floer \cite{floer-lagrangian}. Note that $M_g$ is monotone and thus any simply-connected Lagrangian in $M_g$ is also monotone in the sense of \cite{oh}. For the case $g=2$, our arguments show that the action of $\widehat{\rho}$ on the Fukaya category has kernel contained in an order $2$ subgroup of $\widehat{\Gamma}_2$ generated by a hyperelliptic involution. See Remark \ref{rmk:genus2} for more details.

The proofs of Theorems \ref{thm:main} and \ref{thm:main-2} involve several ingredients. The first is Floer's $SO(3)$ instanton homology for admissible bundles over $3$-manifolds \cite{floer-dehn} and a non-vanishing criterion in this setting for irreducible $3$-manifolds due to Kronheimer and Mrowka (Proposition \ref{prop:nonvanishinginstanton}). The second is a version of the Atiyah--Floer Conjecture for admissible bundles (Theorem \ref{thm:atiyahfloer}) proved by Fukaya, Lipyanskiy and the first author \cite{dfl}; this relates instanton homology to the symplectic topology of $M_g$. Finally, several arguments from Clarkson's work in the Heegaard Floer setting \cite{clarkson} are adapted and used, together with the previous ingredients, to obtain the results.

The idea of using instanton homology to understand the symplectic mapping class group of $M_g$ is implicit in the work of Dostoglou and Salamon \cite{dostoglou-salamon}, who proved a version of the Atiyah--Floer Conjecture for mapping tori.

This paper is organized as follows. In Section \ref{sec:background}, the requisite background on instanton Floer homology is reviewed, and in Section \ref{sec:proof} the main line of argument for Theorems \ref{thm:main} and \ref{thm:main-2} is carried out. In Section \ref{sec:further}, additional questions and conjectures are discussed.\\ 

\noindent \textbf{Acknowledgments.} A related  question about the action of the mapping class group of Riemann surfaces on $SU(2)$ character varieties was proposed by Robert Lipshitz for the K3 problem list (see Section \ref{sec:further}), where he suggests that Clarkson's work \cite{clarkson} may be useful in addressing the problem. The authors are grateful to him for drawing their attention to \cite{clarkson}. They also thank Ivan Smith for an informative correspondence and Ken Baker and Bill Goldman for helpful discussions. The first author was supported by NSF Grant DMS-2208181 and NSF FRG Grant DMS-1952762 and the second author was supported by NSF Grant DMS-1952762.

\section{Background on instanton Floer homology}\label{sec:background}

Let $(Y,w)$ be a pair consisting of a connected closed oriented $3$-manifold $Y$ and an embedded unoriented $1$-manifold $w\subset Y$. There is an $SO(3)$-bundle $E_w\to Y$ naturally associated to $w\subset Y$ such that $[w]\in H_1(Y;\Z/2)$ is Poincar\'{e} dual to $w_2(E_w)$. An embedded orientable surface $R\subset Y$ is {\emph{nice}} if the algebraic intersection $w\cdot R$ is odd. The pair $(Y,w)$ is {\emph{admissible}} if it admits a nice surface. Given such a pair, Floer constructed \cite{floer-dehn} its instanton homology 
\[
    I_\ast(Y,w),
\]
an abelian group with a relative $\Z/8$-grading.  This group is determined up to isomorphism by $Y$ and the class $[w]\in H_1(Y;\Z/2)$. As explained in \cite{km-sutures}, a nice surface for $(Y,w)$ induces a $4$-periodic involution on $I_\ast(Y,w)$, and the quotient $I_\ast(Y,w)_R$ is a relatively $\Z/4$-graded abelian group.

\vspace{0.15cm}

\begin{prop}\label{prop:nonvanishinginstanton}
     Suppose $(Y,w)$ is an admissible pair. Then we have:
     \begin{equation}\label{eq:2spherevanishing}
I_\ast(Y,w) =0 \quad \Longleftrightarrow\quad  Y\cong Y'\# S^1\times S^2,\;\;
    w\cdot S^2 \text{ odd}.
\end{equation}
In particular, if $Y$ is irreducible, then $I_\ast(Y,w)\neq 0$.
\end{prop}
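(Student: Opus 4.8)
The plan is to establish the equivalence \eqref{eq:2spherevanishing} by combining known vanishing results with a connected-sum formula and an induction on complexity. The "$\Longleftarrow$" direction is the easier one: if $Y \cong Y' \# S^1\times S^2$ with $w\cdot S^2$ odd, then the embedded $2$-sphere $S^2$ has odd intersection with $w$, so it is itself a nice surface, and one invokes the vanishing theorem for instanton homology in the presence of an essential $2$-sphere meeting $w$ oddly. Concretely, $S^1 \times S^2$ equipped with a bundle whose $w_2$ is dual to $S^1 \times \{pt\}$... wait, one needs $w \cdot S^2$ odd, so $w$ runs along the $S^1$ factor; the flat connections on this admissible bundle over $S^1\times S^2$ form an empty or obstructed moduli space, and more to the point the standard Mayer--Vietoris / excision argument shows $I_\ast$ of the connected sum vanishes. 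I would cite the relevant computation (this is essentially in Floer's original work and in \cite{km-sutures}); the key input is that $I_\ast(S^1\times S^2, S^1\times pt)=0$ together with a connected-sum formula $I_\ast(Y_1 \# Y_2, w_1 \sqcup w_2)$ expressed via a tensor/torsion product over the two factors, at least one of which here is zero.

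For the "$\Longrightarrow$" direction I would argue the contrapositive: assuming $Y$ does not admit such a connected-sum decomposition (with the sphere meeting $w$ oddly), show $I_\ast(Y,w)\neq 0$. First reduce to the case where $Y$ is irreducible: decompose $Y$ into prime summands $Y = Y_1 \# \cdots \# Y_k$ and distribute $w$ among them; any $S^2\times S^1$ summand must, by hypothesis, meet $w$ evenly, so we may absorb it (it contributes trivially / can be removed without killing admissibility since the nice surface persists in another summand), and any other reducible behavior is handled by the prime decomposition. Each prime summand carrying an odd piece of $w$ is either irreducible or is $S^1\times S^2$; using the connected-sum formula, $I_\ast(Y,w)\neq 0$ provided each relevant summand has nonzero instanton homology, which reduces us to: \emph{$Y$ irreducible $\Rightarrow$ $I_\ast(Y,w)\neq 0$.}

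The heart of the matter is thus the irreducible case, and this is where I expect the main obstacle to lie. The strategy is to use the non-vanishing theorem of Kronheimer and Mrowka: for an irreducible $3$-manifold, instanton homology with appropriate local coefficients (or the sutured instanton homology of the complement of a suitable knot/surface) detects the topology, building on their work relating $I^\natural$, sutured instanton homology, and the fact that sutured instanton homology of an irreducible sutured manifold is nonzero (the "Gabai sutured hierarchy" argument). Concretely: pick a nice surface $R$ for $(Y,w)$; if $R$ can be taken to have genus $\geq 1$ one relates $I_\ast(Y,w)_R$ to a version of sutured instanton homology of $Y$ cut along $R$, which is nonzero by irreducibility via a taut foliation / sutured hierarchy argument à la Gabai and Juhász adapted to instantons by Kronheimer--Mrowka. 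If $R$ is a sphere, irreducibility forces $R$ to bound a ball, contradicting that it meets $w$ oddly (a sphere bounding a ball has zero intersection with any $1$-cycle) — so in the irreducible case the nice surface automatically has positive genus, which is exactly the good case. The delicate points are: (i) correctly matching Floer's $I_\ast(Y,w)$ and the quotient $I_\ast(Y,w)_R$ with the sutured-instanton model used in the Kronheimer--Mrowka non-vanishing theorem, including grading and coefficient bookkeeping; and (ii) ensuring the connected-sum formula has the precise form needed so that no cancellation occurs when assembling the prime pieces. I would handle (i) by citing the comparison results in \cite{km-sutures} and subsequent literature, and (ii) by using the torsion-free or field-coefficient version of the formula so that a tensor product of nonzero groups is nonzero.
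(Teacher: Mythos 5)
Your overall architecture --- the backward implication from the absence of flat connections on the nontrivial $SO(3)$-bundle over $S^2$, and the forward implication via prime decomposition plus the Kronheimer--Mrowka non-vanishing theorem for irreducible pieces --- matches the paper's. But there is a genuine gap at the connected-sum step. You propose to apply a K\"unneth-type formula directly to Floer's $I_\ast(Y,w)$ for admissible pairs and to ``distribute $w$ among'' the prime summands. Two problems arise. First, a prime summand that does not carry an odd portion of $w$ --- in particular any summand with $b_1=0$, since an admissible pair must contain an orientable surface meeting $w$ oddly and hence must have $b_1>0$ --- is not an admissible pair, so $I_\ast$ of that summand is not even defined in Floer's framework. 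Second, the connected-sum theorem for $I_\ast$ of admissible bundles is not a clean tensor product; what is available (\S 9.7 of \cite{scaduto-thesis}) is a rank inequality, not the formula ``$I_\ast$ of a sum is a tensor product of nonzero groups,'' so ``no cancellation'' cannot be arranged simply by passing to field coefficients. The paper's device for both issues is to route the entire forward direction through the framed invariant $I^\#(Y,w):=I_\ast(Y\# T^3, w\cup u)_T$, which is defined for arbitrary (not necessarily admissible) pairs, genuinely satisfies a K\"unneth formula over $\Q$ \cite{km-unknot}, and whose rank bounds that of $I_\ast(Y,w)$ from below for admissible pairs.

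Relatedly, your treatment of the irreducible case only covers pieces admitting a homologically essential surface of positive Thurston norm; for prime summands that are rational homology spheres the sutured/Thurston-norm argument does not apply, and the paper instead uses the Euler characteristic computation $\chi(I^\#(Y,w))=|H_1(Y;\Z)|$ from \cite{scaduto-thesis} to obtain non-vanishing there. With these two repairs --- working with $I^\#$ throughout and handling $b_1=0$ summands by the Euler characteristic --- your plan becomes essentially the paper's proof.
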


\begin{proof}
The ``$\Leftarrow$'' implication follows easily from the construction of $I_\ast(Y,w)$ and the fact that there are no flat connections on the non-trivial $SO(3)$-bundle over the $2$-sphere. To prove the ``$\Rightarrow$'' direction, we use the framed instanton homology 
\[
    I^\#(Y,w):= I_\ast(Y\# T^3,w\cup u)_T
\]
where $u:=S^1\times \{\text{pt}\}$ and $T:= \{\text{pt}\}\times S^1\times S^1$. Note that $I^\#(Y,w)$ is defined for any pair $(Y,w)$, not just admissible ones. The argument of \cite[Cor. 5.21]{dis}, relying on machinery of \cite{km-sutures}, shows that $I^\#(Y,w; \Q) \neq 0$ if $Y$ is irreducible and $b_1(Y)>0$. Indeed, these conditions guarantee the existence of a surface with non-trivial homology class and positive Thurston norm. If $b_1(Y)=0$, by \cite[Cor. 1.4]{scaduto-thesis} the Euler characteristic of the framed instanton homology of $(Y,w)$ for any $w\subset Y$ is equal to $|H_1(Y;\Z)|$ and thus $I^\#(Y,w;\Q)\neq 0$. Using the K\"{u}nneth formula of \cite[\S 5.5]{km-unknot}, 
\[
    I^\#(Y\# Y', w\cup w';\Q) \cong I^\#(Y,w;\Q)\otimes I^\#(Y',w';\Q),
\]
together with the prime decomposition of $Y$ and the computation $I_\ast^\#(S^1\times S^2,\emptyset)=\Z^2$, we obtain that $I^\#(Y,w;\Q)  \neq  0$ if and only if $Y$ splits off an $S^1\times S^2$ summand where $w$ has odd pairing with $S^2$. Finally, for any admissible pair $(Y,w)$, the connected sum result of \cite[\S 9.7]{scaduto-thesis} shows that the rank of $I_\ast(Y,w)$ is bounded below by that of $I^\#_\ast(Y,w)$, yielding the claim.
\end{proof}

\vspace{0.15cm}

\begin{remark}
    {\normalfont The result that $I_\ast(Y,w)$ is non-zero for admissible pairs $(Y,w)$ with $Y$ irreducible is originally due to Kronheimer and Mrowka \cite{km-witten,km-icm}, and this is the substantive part of the proposition that is used below. Tye Lidman was also aware of an extension of Kronheimer and Mrowka's result to more general admissible pairs. 
    }
\end{remark}

\vspace{0.15cm}

An {\emph{admissible splitting}} of an admissible pair $(Y,w)$ is a decomposition 
\begin{equation}\label{eq:admissibledecomposition}
    (Y,w)= (Y_1,w_1)\cup_{(\Sigma,v)} (Y_2,w_2)
\end{equation}
where each connected component of $\Sigma$ is a nice surface for $(Y,w)$, $Y_1$ and $Y_2$ are connected oriented 3-manifolds with $\partial Y_1 = \Sigma = -\partial Y_2$, and $w_i\subset Y_i$ are properly embedded unoriented $1$-manifolds with $\partial w_i=v$, where $v$ is a collection of points on $\Sigma$. Note that $\# v$ is necessarily an even integer, while the number of points in the intersection of $v$ and each connected component of $\Sigma$ is odd. In particular, $\Sigma$ has an even number of connected components, and as an additional requirement on admissible splittings, we require that $\Sigma$ has two connected components. The bundle $E_w$ over $Y$ restricts to bundles $E_{w_i}$ over $Y_i$ and a bundle $E_v$ over $\Sigma$. The moduli  space $M(\Sigma,v)$ of flat connections on $E_v$ modulo the determinant 1 gauge group is the smooth closed symplectic manifold given as the product of the odd character varieties associated to the components of $\Sigma$. The flat connections on $E_{w_i}$, after a small perturbation, determine an immersed Lagrangian $L(Y_i,w_i)\subset M(\Sigma,v)$. Fukaya, Lipyanskiy and the first author proved the following result, which is a variation on what is known as the Atiyah--Floer Conjecture \cite{atiyah-newinvs}.

\vspace{0.15cm}

\begin{theorem}[\cite{dfl}]\label{thm:atiyahfloer} Suppose that there exist small perturbations such that the Lagrangians $L_i:=L(Y_i,w_i)\subset M(\Sigma,v)$ are embedded. Then the Lagrangian Floer homology ${\rm{HF}}_\ast(L_1,L_2)$ is defined as a relatively $\Z/4$-graded abelian group, and there is a homogeneously graded isomorphism:
\[
    I_\ast(Y,w)_\Sigma \cong {\rm{HF}}_\ast(L_1,L_2).
\]
\end{theorem}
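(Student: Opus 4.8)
The plan is to realize both sides of the claimed isomorphism as the homology of chain complexes that become connected by an explicit chain map --- indeed an isomorphism of complexes --- once the metric on $Y$ is degenerated appropriately. First I would fix a Riemannian metric on $Y$ that is cylindrical on a long collar $\Sigma\times[-T,T]$ of $\Sigma$, and choose holonomy perturbations supported in the interiors of $Y_1$ and $Y_2$ so that the perturbed flat moduli spaces are cut out transversally; by hypothesis the induced Lagrangians $L_i\subset M(\Sigma,v)$ may then be taken embedded and transverse. An implicit-function argument near the neck, applied to the restriction-to-$\Sigma$ map on perturbed flat connections, shows that for $T$ large the generators of the perturbed instanton complex $CI_\ast(Y,w)$ are in canonical bijection with the points of $L_1\cap L_2$, i.e.\ with the generators of $CF_\ast(L_1,L_2)$. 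The relative $\mathbb{Z}/8$-grading of $CI_\ast(Y,w)$ descends to a relative $\mathbb{Z}/4$-grading on the $\Sigma$-quotient, and I would match it with the Maslov grading on $CF_\ast(L_1,L_2)$ through the Dostoglou--Salamon identification of the mod-$8$ spectral flow of the Hessian of the Chern--Simons functional across the neck with the Maslov index in $M(\Sigma,v)$.

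The analytic heart is the identification of the two differentials, which I would obtain by studying anti-self-dual connections on $Y\times\mathbb{R}$ in the adiabatic limit $T\to\infty$. On the stretched region $\Sigma\times[-T,T]\times\mathbb{R}$, after rescaling $[-T,T]$ to $[-1,1]$ and shrinking the metric on $\Sigma$, an ASD connection written in a gauge that is temporal in the two cylinder directions has $\Sigma$-component flat to leading order, and the surviving equations degenerate to the Cauchy--Riemann equation for the resulting map $[-1,1]\times\mathbb{R}\to M(\Sigma,v)$ with boundary on $L_1\sqcup L_2$; over the remaining pieces $Y_i\times\mathbb{R}$ the limiting connection is flat on $Y_i$ at each time, carries no energy, and traces out the corresponding boundary component of the strip inside $L_i$. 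Three ingredients are needed: Uhlenbeck-type compactness on the $Y_i$-regions, where admissibility rules out reducible connections throughout so that no gauge-theoretic degeneracies interfere; adiabatic-limit compactness on the neck in the style of Dostoglou--Salamon, producing a genuine holomorphic strip; and exponential-decay estimates toward the $\mathbb{R}$-ends so that no energy is lost. Bubbling is excluded from the compactifications of the zero- and one-dimensional moduli spaces by the usual dimension count, sharpened using the monotonicity of $M(\Sigma,v)$ and the energy quantization for $SO(3)$-instantons: the index-one trajectories counted by the differentials carry too little energy for an instanton bubble or a holomorphic sphere or disk to split off.

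I would then combine these compactness statements with a transversality package and a gluing theorem across the interface $\Sigma\times\mathbb{R}$, the latter by the usual Newton iteration using that the linearized operators on the two sides fit together, to produce for $T$ sufficiently large a signed bijection between rigid ASD trajectories on $Y\times\mathbb{R}$ modulo translation and rigid holomorphic strips in $M(\Sigma,v)$ modulo translation, with a compatible matching of orientations coming from determinant lines. It follows that $CI_\ast(Y,w)_\Sigma\cong CF_\ast(L_1,L_2)$ as relatively $\mathbb{Z}/4$-graded complexes once $T\gg 0$, which gives Theorem \ref{thm:atiyahfloer}. Should a direct chain-level identification prove too rigid --- for instance because the neck cannot be taken uniformly long over all the relevant moduli spaces at once --- the alternative, and the route taken in \cite{dfl}, is to build the comparison map by counting solutions of a ``mixed equation'' on a fixed $4$-manifold that is anti-self-dual on a copy of $Y\times(-\infty,0]$ and a holomorphic strip in $M(\Sigma,v)$ on $[0,\infty)\times\mathbb{R}$, glued along $\Sigma\times\mathbb{R}$, and then to prove this is a quasi-isomorphism by an energy-filtration argument; this trades the delicate global adiabatic limit for a more local interface analysis of the same flavor.

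The main obstacle will be the second step: proving compactness \emph{uniformly in the stretching parameter}, which fuses Uhlenbeck compactness, the adiabatic limit for the anti-self-duality equation, and the exclusion of bubbling into one statement, and then dovetailing it with a gluing theorem precise enough that the resulting correspondence is a genuine bijection rather than merely a surjection onto part of the moduli space. Coherent orientations are a further technical point; the embeddedness hypothesis on the $L_i$ is what lets one avoid the self-intersections that would otherwise force the heavier machinery of immersed Floer theory.
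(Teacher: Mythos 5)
First, a point of calibration: the paper does not prove Theorem \ref{thm:atiyahfloer} at all --- it is imported wholesale from \cite{dfl} and used as a black box. So the only meaningful question is whether your sketch is a viable route to that external theorem, and here your proposal has a real problem: the approach you develop at length (stretch the neck $\Sigma\times[-T,T]$, take the adiabatic limit of the ASD equation, and match rigid instanton trajectories with rigid holomorphic strips via a uniform-in-$T$ compactness-plus-gluing package) is precisely the strategy that has never been carried out in this generality. The step you flag as ``the main obstacle'' is not a technical loose end to be tightened; it is the reason the Atiyah--Floer conjecture remained open for three decades. Dostoglou and Salamon \cite{dostoglou-salamon} completed the adiabatic-limit analysis only for mapping tori, where the symplectic side is fixed-point Floer homology and there are no Lagrangian boundary conditions. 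In the handlebody/compression-body setting the limiting strips carry boundary on $L_1$ and $L_2$, and the degeneration of the four-dimensional metric interacts with those boundary conditions in a way that defeats the known compactness and gluing arguments (energy concentration at the seam, lack of a uniform elliptic estimate near the Lagrangian boundary as $T\to\infty$). Asserting ``a transversality package and a gluing theorem across the interface'' does not close this gap; no such theorem is available, and your dimension-count exclusion of bubbling presupposes the very compactness that is missing.

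Your closing aside --- that one can instead count solutions of a mixed equation, anti-self-dual on a four-dimensional piece and pseudo-holomorphic with values in $M(\Sigma,v)$ on a two-dimensional piece, matched along $\Sigma\times\R$ --- is in fact the actual method of \cite{dfl}, not an optional fallback. That route replaces the global adiabatic limit by a local analysis of the matching condition at the interface (regularity and removal of singularity for the mixed equation, compactness for sequences of mixed solutions, and an energy argument showing the resulting comparison maps are quasi-isomorphisms), and it is the only currently known way to prove the statement as formulated, including the $\Z/4$-graded refinement over the quotient by the involution induced by $\Sigma$. So the honest verdict is: your primary argument contains an unfilled (and, at present, unfillable) analytic gap, while the one-sentence alternative you mention in passing is the genuine proof; a correct write-up would invert the emphasis and develop the mixed-equation comparison, for which the identification of generators and gradings in your first paragraph would still be the correct starting point.
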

\vspace{0.25cm}

We consider a particular family of decompositions \eqref{eq:admissibledecomposition}. Let $H$ be a standard oriented handlebody of genus $g\geq 2$, obtained from the 3-ball by attaching $g$-many $1$-handles. Choose an identification of $\partial H$ with $\Sigma_g$, a closed oriented surface of genus $g$. Let $c\subset H$ be a loop in $H$ formed by taking the core of one of the $1$-handles and closing it up inside the $3$-ball. Remove an open regular neighborhood $N(c)\subset \text{int}(H)$ of $c$ to obtain $H^{\circ}$. Let $T\subset \partial H^\circ$ denote the boundary $2$-torus of the closure of $N(c)$. Note that $\partial H^\circ$ is the disjoint union of $T$ and $\Sigma_g$. The manifold $H^\circ$ is an example of a compression body, and may also be viewed as obtained from $[-1,1]\times T^2$ by attaching $(g-1)$-many $1$-handles to $\{1\}\times T^2$, with $T$ corresponding to $\{-1\}\times  T^2$.

\begin{figure}[t]
\centering
\begin{tikzpicture}
    \node at (0,0) {\includegraphics[scale=2.4]{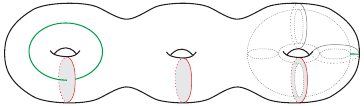}};
    \draw[gray] (-6.65,-2) -- (-5.98,-0.6);
    \node[below right] at (-7,-2) {$w'$};
    \draw[gray] (7.2,-2) -- (7,-0.07);
    \node[below right] at (7,-2) {$w''$};
    \draw[gray] (2.45,-1.95) -- (3,-1);
    \node[below right] at (2,-2) {$T$};
    \draw[gray] (-2.35,-1.95) -- (-2.7,-1.5);
    \node[below right] at (-2.4,-2) {$\Sigma_g$};
\end{tikzpicture}
\caption{\small The compression body $H^\circ$ is a genus $g$ handlebody (here $g=3$) with an open neighborhood of a core $1$-handle removed. The boundary created by this removal is the $2$-torus $T$, which in the picture sits interior to the outer boundary $\Sigma_g$. The circle $w'$ is another core, and $w''$ is an arc from $T$ to $\Sigma_g$.}
\label{fig:compressionbody}
\end{figure}

Let $w'\subset \text{int}(H^\circ)$ be a circle formed by the core of another $1$-handle, and $w''\subset H^\circ$ a small properly embedded arc that runs from $t\in T$ to $p\in\Sigma_g$. See Figure \ref{fig:compressionbody}. Let $\phi:\Sigma_g\to \Sigma_g$ be an orientation-preserving diffeomorphism such that $\phi(p)=p$. Take two copies $H^\circ_1$ and $H^\circ_2$ of $H^\circ$, where $H^\circ_1$ is oriented as is $H^\circ$, and $H^\circ_2$ oriented oppositely. Define $w_1=w_1'\cup w_2''\subset H^\circ_1$ and $w_2=w_2''\subset H^\circ_2$. Then we may form an admissible pair
\begin{equation}\label{eq:ouradmissibledecomposition}
(Y_\phi, w_\phi) := (H^\circ_1, w_1) \cup_{(\Sigma_g\cup T, \{p,t\}) } (H_2^\circ, w_2)
\end{equation}
by gluing $\partial H^\circ_1 \to \partial H_2^\circ$ using $\phi$ on the $\Sigma_g$ boundary components, and using the identity map on the $T$ boundary components. By construction, \eqref{eq:ouradmissibledecomposition} is an admissible decomposition of $(Y_\phi,w_\phi)$.

Noting that the genus $1$ moduli space $M_1=M(T,t)$ consists of a single point, the relevant symplectic manifold $M(\Sigma_g\cup T, \{p,t\})$ is naturally identified with $M_g$.

\vspace{0.15cm}

\begin{lemma}
For $i\in \{1,2\}$, the unperturbed Lagrangian $L_i:=L(H_i^\circ,w_i)$ is embedded in $M_g$.
\end{lemma}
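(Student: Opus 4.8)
The goal is to show that the Lagrangian $L_i = L(H_i^\circ, w_i) \subset M_g$ associated to the compression body $(H_i^\circ, w_i)$ is embedded without any perturbation. My plan is to understand $L_i$ explicitly as a space of flat connections modulo gauge, using the structure of $H^\circ$ as a handlebody with one $1$-handle neighborhood removed. Recall that flat connections on $E_{w_i}$ over $H_i^\circ$, restricted to the boundary, sweep out $L_i$ inside $M(\Sigma_g \cup T, \{p,t\}) \cong M_g$. Since $\pi_1(H^\circ)$ is free (a compression body deformation retracts onto a wedge of circles together with the core torus), I would first compute the representation variety of flat $SO(3)$-connections on $E_{w_i}$ with the holonomy constraint around $w'$ and $w''$, then analyze the boundary restriction map.

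The key steps, in order: (1) Identify $\pi_1(H^\circ)$ and the effect of the marked arc $w''$ and core circle $w'$ on the holonomy conditions; concretely, a flat connection on $H^\circ_1$ with $w_1 = w_1' \cup w_1''$ corresponds to an $SU(2)$-representation of the relevant fundamental groupoid sending the meridian of $w_1'$ to $-1$ and with appropriate $-1$ behavior along $w_1''$ connecting the $T$ and $\Sigma_g$ boundary data. (2) Show the boundary restriction $L_i \to M_g$ is injective: since $H^\circ$ is a compression body whose $\Sigma_g$ boundary is obtained by compressing $g-1$ of the handles, the relevant homomorphisms from $\pi_1(\Sigma_g \setminus p)$ that extend over $H^\circ_i$ are exactly those that kill the prescribed compressing curves, and these are determined by their restriction to $\partial H^\circ_i$; one checks that distinct flat connections on the compression body that are gauge-equivalent on the boundary are already gauge-equivalent. (3) Show the restriction is an immersion with image a smooth submanifold — i.e., that there are no singular points — by verifying the relevant first cohomology with twisted coefficients has the expected rank at every point, using that the representations in question are of the simplest type (the handlebody directions contribute "free" deformations). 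Since $M(T,t)$ is a point, the torus factor imposes no extra conditions beyond compatibility along $w''$, which simplifies the analysis considerably.

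I expect the main obstacle to be ruling out self-intersection points of $L_i$: a priori two different flat connections on $H^\circ_i$ could restrict to the same point of $M_g$ (or to gauge-equivalent connections on $\Sigma_g$) without being related on the handlebody, which is exactly the phenomenon that forces perturbations in the general Atiyah--Floer setup. The point of choosing this very specific compression body — with $w_i$ built from cores of handles and a short arc to the torus boundary — is that the extension problem over $H^\circ_i$ has a unique solution given the boundary data, so no transversality failure occurs. Verifying this cleanly, likely by an explicit coordinate description of $L_i$ as (a torus bundle over) a product of $2$-spheres sitting inside $M_g$ via an explicit section, is where the real content lies; the smoothness and embeddedness then follow from this explicit model together with a dimension count against $\dim M_g = 6g-6$.
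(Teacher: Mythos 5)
Your plan is viable and, in outline, rests on the right mechanism, but it takes a much longer road than the paper and stops short of executing the decisive step. The paper's proof is essentially two lines: it invokes a general embeddedness criterion from the Atiyah--Floer paper of Daemi--Fukaya--Lipyanskiy (their Prop.~2.21), which says that $L(Y_1,w_1)$ is embedded in $M(\Sigma,v)$ whenever the inclusion of \emph{some} connected component $\Sigma_0\subset\Sigma$ induces a surjection $\pi_1(\Sigma_0)\to\pi_1(Y_1)$; one then only has to observe that $\pi_1(\Sigma_g)\to\pi_1(H^\circ)$ is surjective. Your step (2) --- that a flat connection on the compression body is determined by its restriction to the boundary because the extension problem over $H_i^\circ$ has a unique solution --- is precisely the content of that criterion, so you have correctly isolated why no perturbation is needed. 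What your route buys is an explicit model of $L_i$ (which the paper in fact records immediately after the lemma: $L_i\cong SU(2)^{g-1}$, cut out by setting the appropriate $B_j$ equal to $\pm 1$), at the cost of having to verify smoothness and injectivity by hand; what the paper's route buys is that all of your steps (1)--(3) collapse into checking a single $\pi_1$-surjectivity.

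Two cautions. First, your parenthetical claim that $\pi_1(H^\circ)$ is free is incorrect: $H^\circ$ deformation retracts onto a torus wedge $(g-1)$ circles, so $\pi_1(H^\circ)\cong \Z^2 * F_{g-1}$, which is not free. This does not derail the argument (surjectivity of $\pi_1(\Sigma_g)\to\pi_1(H^\circ)$ is what matters, and that holds), but it would propagate into your step (1) if you tried to list relations. Second, as written the proposal explicitly defers the verification that ``the real content lies'' in --- the explicit coordinate description ruling out self-intersections and singular points. That verification is genuinely routine here (irreducibility forced by $[A_g,B_g]=-1$ kills the stabilizer issues, and the $A_1,\dots,A_{g-1}$ coordinates are free), but a complete proof must either carry it out or cite the general criterion; the plan alone does not yet constitute a proof.
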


\begin{proof}
This follows from \cite[Prop. 2.21]{dfl}, which in the general case of a decomposition \eqref{eq:admissibledecomposition} states that the unperturbed Lagrangian $L(Y_1,w_1)$ is embedded in $M(\Sigma,v)$ if for some connected component $\Sigma_0\subset \Sigma$ the inclusion induces a surjection $\pi_1(\Sigma_0)\to \pi_1(Y_1)$. In the case of interest, the inclusion of the boundary $\Sigma_g\subset \partial H^\circ$ clearly induces a surjection $\pi_1(\Sigma_g)\to \pi_1(H^\circ)$.
\end{proof}

\vspace{0.25cm}

The construction of $(Y_\phi,w_\phi)$ depends only on the class $[\phi]\in \text{Mod}(\Sigma_g,p):=\pi_0 \text{Diff}^+(\Sigma_g,p)$. Recall the Birman exact sequence (see \cite[\S 4.2]{farb-margalit}):
\begin{equation}\label{eq:birmanexactseq}
    1\to \pi_1(\Sigma_g,p) \to \text{Mod}(\Sigma_g,p) \to \text{Mod}(\Sigma_g) \to 1
\end{equation}
where elements of $\pi_1(\Sigma_g,p)$ give mapping class elements that are ``push maps'' dragging along a given loop based at $p$. The moduli space $M_g$ has the concrete description
\begin{equation}\label{eq:moduli-concrete}
    M_g = \{(A_1,\ldots,A_g,B_1,\ldots,B_g) \in SU(2)^{2g} : [A_1,B_1]\cdots [A_g,B_g]= -1\}/SU(2)
\end{equation}
where the $SU(2)$-action conjugates a tuple $(A_i,B_i)$ to $(gA_ig^{-1},gB_ig^{-1})$ for a given $g\in SU(2)$. The identification \eqref{eq:moduli-concrete} follows from the description of $M_g$ in terms of flat $SO(3)$-connections. One lifts the $SO(3)$-bundle over $\Sigma_g\setminus \{p\}$ to a trivialized $SU(2)$-bundle, and $A_i,B_i$ are defined to be the holonomies of an $SU(2)$ lift of a given flat $SO(3)$-connection to this trivialization, where the holonomies are taken along a standard generating basis for the fundamental group. In actuality, to make these holonomies explicit, one should choose a basepoint in $\Sigma_g\setminus \{p\}$ to base all loops, but we will suppress this choice in what follows.

The Lagrangians $L_i= L(H_i^\circ,w_i)\subset M_g$ for $i\in \{1,2\}$ may be described in similar terms. Consider first $L_2$, which is the space of flat $SO(3)$-connections in $M_g$ that extend to a flat connection on the manifold-bundle data $(H_2^\circ,w_2)$. View $H^\circ_2$ as $[-1,1]\times T^2$ with $(g-1)$-many $1$-handles attached to $\{1\}\times T^2$. The arc $w_2=w_2''$ may be viewed as $[-1,1]\times \{q\}$ where $q$ is away from the $1$-handle attachments, and $p=(1,q)$. Extend the trivialized $SU(2)$-bundle over $\Sigma_g\setminus \{p\}$ from above to $H_2^\circ\setminus w_2$. Then the flat connections of interest are those that extend to this trivialized $SU(2)$-bundle and which have holonomy $-1$ around meridional loops of $w_2$. Then we may identify
    \begin{equation}\label{eq:lagrangiandescription}
        L_2 = \{(A_i,B_i) \in SU(2)^{2g}\; : \; B_i=1 \; (1\leq i \leq g-1), \; [A_g,B_g]= -1\}/SU(2).
    \end{equation}
Indeed, the holonomy $B_i$ for $1\leq i\leq g-1$ is along a loop in $\Sigma_g\setminus \{p\}$ which we may arrange to bound the cocore disk of the $i^{th}$ 1-handle of $H_2^\circ$, yielding $B_i=1$ in these cases. The elements $A_g,B_g$ are the holonomies around the two circle factors of $\{1\} \times T^2$.

The case of $L_1$ is similar. In this case recall that $w_1$ consists of two components $w_1'$ and $w_1''$. The only change in the above discussion involves $B_1$, say, which records the holonomy around a meridian of the loop $w_1'$. The condition that the trivialized $SU(2)$-bundle over $H_1^\circ\setminus w_1$ does not extend over $w_1'$ translates into the condition $B_1=-1$. We obtain
    \begin{equation}\label{eq:lagrangiandescription2}
        L_1 = \{(A_i,B_i) \in SU(2)^{2g}\; : \; B_1=-1, \; B_i=1 \; (2\leq i \leq g-1), \; [A_g,B_g]= -1\}/SU(2).
    \end{equation}
We remark that for $i\in \{1,2\}$ there are diffeomorphisms
    \begin{equation}\label{eq:lagrangiandescriptiondiff}
        L_i = L(H_i^\circ,w_i) \cong SU(2)^{g-1}.
    \end{equation}
    Indeed, the diffeomorphisms are determined by taking a tuple $(A_i,B_i)$ in either of the cases \eqref{eq:lagrangiandescription} and \eqref{eq:lagrangiandescription2}, conjugating the tuple so that we have
    \[
        A_g = \left[\begin{array}{cc} i &  0 \\ 0 & -i \end{array} \right], \qquad B_g = \left[\begin{array}{cc} 0 &  1 \\ -1 & 0 \end{array} \right],
    \]
    and then forgetting $B_1,\ldots,B_g,A_g$ to obtain $(A_1,\ldots,A_{g-1})\in SU(2)^{g-1}$.

A diffeomorphism $\phi$ as above induces a symplectomorphism $f_\phi:M_g\to M_g$. In fact, $f_\phi$ depends only on $[\phi]\in \text{Mod}(\Sigma_g,p)$. Considering $f_\phi$ up to symplectic isotopy leads to a homomorphism $\text{Mod}(\Sigma_g,p) \to \pi_0 \text{Symp}(M_g)$. As explained by Smith \cite[\S 2.1]{smith}, this construction factors through a homomorphism $\widehat{\rho}:\widehat{\Gamma}_g\to \pi_0 \text{Symp}(M_g)$ where the group $\widehat{\Gamma}_g$ is a quotient of $\text{Mod}(\Sigma_g,p)$ and fits into the exact sequence \eqref{eq:theexactsequence}. This follows from the fact that the action on $M_g$ induced by the push map associated to a class in $\pi_1(\Sigma_g,p)$ depends only on the associated class in $H_1(\Sigma_g;\Z/2)$. Identifying $H_1(\Sigma_g;\Z/2)$ with $H^1(\Sigma_g;\Z/2)$ by Poincar\'{e} duality, the action is
\begin{equation}\label{eq:push}
    (a_i,b_i)\cdot (A_i,B_i) =  ((-1)^{a_i}A_i,(-1)^{b_i}B_i)
\end{equation}
for $(a_i,b_i)\in (\Z/2)^{2g} \cong H^1(\Sigma_g;\Z/2)$, where this last identification is induced by the same generating set of loops as is used in describing $M_g$.

\vspace{0.15cm}

\begin{prop}\label{prop:nonvanishingimpliesfaithful}
Let $\phi:(\Sigma_g,p)\to (\Sigma_g,p)$ be a diffeomorphism. If $I_\ast(Y_\phi,w_\phi)\neq 0$, then there exists a pair of simply-connected Lagrangians $L$ and $L'$ embedded in $M_g$ such that 
    \begin{equation*}\label{eq:lagrangianfloercomparison}
        {\rm HF}(L,L')\not\cong {\rm HF}(L,\widehat{\rho}(\phi)(L')).
    \end{equation*}
    In particular, $\widehat{\rho}(\phi)\neq 0 = [{\rm{id}}] \in \pi_0 {\rm Symp}(M_g)$. 
\end{prop}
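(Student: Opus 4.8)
The plan is to take $L=L_1:=L(H_1^\circ,w_1)$ and $L'=L_2:=L(H_2^\circ,w_2)$, the Lagrangians from the admissible splitting \eqref{eq:ouradmissibledecomposition}. Both are embedded in $M_g$, and by \eqref{eq:lagrangiandescriptiondiff} each is diffeomorphic to $SU(2)^{g-1}\cong(S^3)^{g-1}$, hence simply connected; being simply-connected Lagrangians in the monotone manifold $M_g$, they are monotone, so the groups ${\rm HF}(L_1,L_2)$ and ${\rm HF}(L_1,\widehat{\rho}(\phi)(L_2))$ are defined. The first step is to observe that ${\rm HF}(L_1,L_2)=0$: comparing \eqref{eq:lagrangiandescription} and \eqref{eq:lagrangiandescription2}, any tuple representing a point of $L_1\cap L_2$ would have to satisfy both $B_1=1$ and $B_1=-1$, and since these are conjugation-invariant incompatible conditions we conclude $L_1\cap L_2=\emptyset$, so the Floer chain complex has no generators.

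The second step is to show ${\rm HF}(L_1,\widehat{\rho}(\phi)(L_2))\neq 0$ using the hypothesis. Unwinding \eqref{eq:ouradmissibledecomposition}, the splitting surface $\Sigma=\Sigma_g\cup T$ has two components, each of which is nice, and the two Lagrangians in $M_g=M(\Sigma_g\cup T,\{p,t\})$ that Theorem \ref{thm:atiyahfloer} associates to the two sides of the splitting are $L_1$ and the image of $L_2$ under the symplectomorphism of $M_g$ induced by the gluing diffeomorphism. Since the $\Sigma_g$-boundaries are glued by $\phi$ and the $T$-boundaries by the identity, this symplectomorphism is a representative of $\widehat{\rho}(\phi)$, and its image $\widehat{\rho}(\phi)(L_2)$ is again embedded; hence Theorem \ref{thm:atiyahfloer} applies and yields a graded isomorphism $I_\ast(Y_\phi,w_\phi)_\Sigma\cong{\rm HF}_\ast(L_1,\widehat{\rho}(\phi)(L_2))$. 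The involution used to form the folded group $I_\ast(Y_\phi,w_\phi)_\Sigma$ is $4$-periodic, so it restricts to an isomorphism between the degree-$j$ and degree-$(j+4)$ parts of the relatively $\Z/8$-graded group $I_\ast(Y_\phi,w_\phi)$; consequently $I_\ast(Y_\phi,w_\phi)_\Sigma$ can vanish only when $I_\ast(Y_\phi,w_\phi)$ does. By hypothesis the latter is nonzero, so ${\rm HF}(L_1,\widehat{\rho}(\phi)(L_2))\neq 0={\rm HF}(L_1,L_2)$; this is the first assertion, with $(L,L')=(L_1,L_2)$.

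For the final assertion, suppose toward a contradiction that $\widehat{\rho}(\phi)=0\in\pi_0{\rm Symp}(M_g)$, so that a representative symplectomorphism is joined to the identity through a path $\psi_t$ of symplectomorphisms of $M_g$. Then $\{\psi_t(L_2)\}$ is a Lagrangian isotopy from $L_2$ to $\widehat{\rho}(\phi)(L_2)$, and since $L_2$ is simply connected we have $H^1(L_2;\R)=0$, so the flux of this isotopy vanishes and it is a Hamiltonian isotopy. As Lagrangian Floer homology depends on each of its two inputs only through their Hamiltonian isotopy classes, this gives ${\rm HF}(L_1,\widehat{\rho}(\phi)(L_2))\cong{\rm HF}(L_1,L_2)$, contradicting the second step. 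Therefore $\widehat{\rho}(\phi)\neq 0$.

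The step that I expect to require the most care is the identification in the second paragraph: verifying that the pair of Lagrangians supplied by Theorem \ref{thm:atiyahfloer} for the splitting \eqref{eq:ouradmissibledecomposition} is exactly $(L_1,\widehat{\rho}(\phi)(L_2))$. This amounts to tracking the identifications $M(\partial H_i^\circ,\cdot)\cong M_g$ through the gluing, checking that the gluing diffeomorphism induces precisely a representative of $\widehat{\rho}(\phi)$ — compatibly with the passage through $\widehat{\Gamma}_g$ and the push-map normalization \eqref{eq:push} — and fixing orientation conventions so that the twist lands on $L_2$; should it instead land on $L_1$, the argument is unaffected, as $L_1$ is also simply connected.
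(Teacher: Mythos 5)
Your proposal is correct and follows essentially the same argument as the paper: take $L=L_1$, $L'=L_2$, note they are disjoint so ${\rm HF}(L_1,L_2)=0$, and use Theorem \ref{thm:atiyahfloer} together with the nonvanishing hypothesis to get ${\rm HF}(L_1,\widehat{\rho}(\phi)(L_2))\neq 0$. The only (harmless) deviation is in the last step, where you derive the Hamiltonian isotopy from the vanishing of flux via $H^1(L_2;\R)=0$, whereas the paper instead uses that $M_g$ is simply connected to upgrade the symplectic isotopy to a global Hamiltonian one; both are valid.
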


\begin{proof}
    Writing $L_i=L(H^\circ_i,w_i)$ as above, Theorem \ref{thm:atiyahfloer} and the assumption $I_\ast(Y_\phi,w_\phi)\neq 0$ gives
    \begin{equation}\label{eq:lagrangianfloercomparison1}
        I_\ast(Y_\phi,w_\phi)_{\Sigma_g\cup T} \cong \text{HF}_\ast(L_1,\widehat{\rho}(\phi)(L_2))\neq 0.
    \end{equation}
    Here we conflate $\widehat{\rho}(\phi)$ with the representing symplectomorphism $f_\phi:M_g\to M_g$ induced by $\phi$. On the other hand, as $L_1$ and $L_2$ are disjoint, see \eqref{eq:lagrangiandescription}--\eqref{eq:lagrangiandescription2}, we have
    \begin{equation}\label{eq:lagrangianfloercomparison2}
       \text{HF}_\ast(L_1,L_2) = 0.
    \end{equation}
    One may also see this vanishing via Theorem \ref{thm:atiyahfloer}, which identifies the Lagrangian Floer homology group \eqref{eq:lagrangianfloercomparison2} with a subgroup of $I_\ast(Y_{\text{id}},w_{\text{id}})$. The latter group vanishes by Proposition \ref{prop:nonvanishinginstanton}, upon observing that $Y_\text{id} = \#^{g-1} S^1\times S^2 \# T^3$ and $w_{\text{id}}$ has odd pairing with one of the $2$-spheres in this decomposition, thanks to the core $w_1'\subset w_1$. In any case, as $L_1$ and $L_2$ are simply-connected by \eqref{eq:lagrangiandescriptiondiff}, the first claim of the proposition follows.   

    For the claim regarding $\widehat{\rho}$, suppose $\widehat{\rho}(\phi)=0$, so that $f_\phi$ is symplectic isotopic to the identity map on $M_g$. As $M_g$ is simply connected \cite[Cor. 2]{newstead-topological}, this implies that $f_\phi$ is Hamiltonian isotopic to the identity. Lagrangian Floer homology is invariant under Hamiltonian isotopies of either Lagrangian, and so \eqref{eq:lagrangianfloercomparison1} and \eqref{eq:lagrangianfloercomparison2} are isomorphic, a contradiction.
\end{proof}

\section{Proofs of theorems} \label{sec:proof}

We now prove Theorems \ref{thm:main} and \ref{thm:main-2}. Let $G$ be the subset of $\widehat{\Gamma}_g$ consisting of those $\phi$ such that 
\[
    \text{HF}(L,\widehat{\rho}(\phi)(L')) \cong \text{HF}(L,L')
\]
as groups, for all simply-connected Lagrangians $L$ and $L'$ in $M_g$.
The statement of Theorem \ref{thm:main-2} is equivalent, in the cases $g\geq 3$, to the subset $G$ consisting only of the identity class. Analogous to \cite[Prop. 2.2]{clarkson}, we claim that $G$ is a normal subgroup of $\widehat{\Gamma}_g$. To see this, first note that the identity element is in $G$, and if $\phi_1,\phi_2\in G$, then for any Lagrangians $L,L'$ as above we have
\begin{align*}
    \text{HF}(L,\widehat{\rho}(\phi_1\phi_2^{-1})(L')) &= \text{HF}(L,\widehat{\rho}(\phi_1)\left(\widehat{\rho}(\phi_2)^{-1}(L')\right)) \\[1mm]
    &\cong \text{HF}(L,\widehat{\rho}(\phi_2)^{-1}(L')) = \text{HF}(\widehat{\rho}(\phi_2)(L),(L'))\\[1mm]
    & \cong \text{HF}(L,L')
\end{align*}
where the first line to second uses $\phi_1\in G$ and the second to third uses $\phi_2\in G$. Thus $G$ is a subgroup. To see that it is normal, let $\phi\in G$ and $\psi\in \widehat{\Gamma}_g$. Then
\begin{align*}
    \text{HF}(L,\widehat{\rho}(\psi^{-1}\phi\psi)(L')) & = \text{HF}(\widehat{\rho}(\psi)(L),\widehat{\rho}(\phi)\left(\widehat{\rho}(\psi)(L')\right)) \\[1mm]
    &  \cong 
    \text{HF}(\widehat{\rho}(\psi)(L),\widehat{\rho}(\psi)(L')) = \text{HF}(L,L')
\end{align*}
where the first line to second uses $\phi\in G$. Thus $\psi^{-1}\phi\psi\in G$, and $G$ is normal.

\vspace{0.15cm}

\begin{lemma}\label{lemma:push}
    Let $\phi:(\Sigma_g,p)\to (\Sigma_g,p)$ be a push map representing a nontrivial class in the image of the homomorphism $H^1(\Sigma_g;\Z/2)\to \widehat{\Gamma}_g$. Then $[\phi]\not\in G$.
\end{lemma}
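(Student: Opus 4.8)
The plan is to produce, for a nontrivial mod-2 cohomology class $\alpha \in H^1(\Sigma_g;\Z/2)$, an explicit pair of simply-connected Lagrangians $L, L'$ in $M_g$ whose Floer homology changes under $\widehat\rho(\phi)$, where $\phi$ is the corresponding push map. The natural candidates are exactly the Lagrangians $L_1 = L(H^\circ_1,w_1)$ and $L_2 = L(H^\circ_2,w_2)$ from the admissible splitting \eqref{eq:ouradmissibledecomposition}, which are diffeomorphic to $SU(2)^{g-1}$ and hence simply-connected, and whose Floer homology is computed by instanton homology via Theorem \ref{thm:atiyahfloer}. Taking $L = L_1$ and $L' = L_2$, we know from Proposition \ref{prop:nonvanishingimpliesfaithful} that ${\rm HF}(L_1, L_2) = 0$ while ${\rm HF}(L_1, \widehat\rho(\phi)(L_2)) \cong I_\ast(Y_\phi, w_\phi)_{\Sigma_g \cup T}$. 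So the claim $[\phi] \notin G$ reduces to showing $I_\ast(Y_\phi, w_\phi) \neq 0$.

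First I would identify the closed $3$-manifold $Y_\phi$ when $\phi$ is a push map along an embedded loop $\gamma \subset \Sigma_g$ based at $p$. Recall $Y_\text{id} = \#^{g-1} S^1\times S^2 \# T^3$; the point is that for a nontrivial $\phi$, the gluing by a push map should change the topology. Using the description of $H^\circ$ as $[-1,1]\times T^2$ with $(g-1)$ one-handles attached, gluing two copies along $\Sigma_g$ via a push map along $\gamma$ amounts to performing a kind of surgery/Dehn twist operation, and I expect one can compute directly from a Heegaard-type picture that $Y_\phi$ is irreducible (or at least does not split off an $S^1 \times S^2$ on which $w_\phi$ has odd pairing) provided $\alpha \neq 0$ and $g \geq 2$. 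The cleanest route: since we only need $I_\ast(Y_\phi,w_\phi) \neq 0$, use the contrapositive of Proposition \ref{prop:nonvanishinginstanton}, i.e. show $Y_\phi$ is \emph{not} of the form $Y'\#S^1\times S^2$ with $w_\phi\cdot S^2$ odd. Alternatively — and this is probably how the authors proceed, following \cite{clarkson} — one argues more directly at the level of the holonomy description: the symplectomorphism $f_\phi$ acts on $M_g$ by the explicit formula \eqref{eq:push}, $(A_i,B_i)\mapsto ((-1)^{a_i}A_i,(-1)^{b_i}B_i)$, so $\widehat\rho(\phi)(L_2)$ has an equally explicit description from \eqref{eq:lagrangiandescription}, and one can try to see directly that $L_1$ and $\widehat\rho(\phi)(L_2)$ must intersect (forcing the Floer homology to possibly be nonzero) or compute the relevant instanton group.

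The main obstacle will be showing the nonvanishing $I_\ast(Y_\phi, w_\phi)\neq 0$, which via Proposition \ref{prop:nonvanishinginstanton} requires understanding the prime decomposition of $Y_\phi$ precisely enough to rule out the "bad" $S^1\times S^2$-summand case. The subtlety is that a push map is a fairly mild mapping class (it lies in the Birman kernel $\pi_1(\Sigma_g,p)$), so one must check that it genuinely obstructs the $S^1\times S^2$-splitting that is present in $Y_\text{id}$; in particular one must track the curve $w_\phi$ carefully, since the nonvanishing criterion is about the pairing of $w_\phi$ with essential spheres, not just about reducibility of $Y_\phi$. I would handle this by choosing a concrete Heegaard diagram for $Y_\phi$ adapted to the compression-body structure, reading off $\pi_1(Y_\phi)$ and the position of $w_\phi$, and showing the pair $(Y_\phi,w_\phi)$ is either admissible with $Y_\phi$ irreducible, or at worst has all $S^1\times S^2$-summands pairing trivially with $w_\phi$ — in either case Proposition \ref{prop:nonvanishinginstanton} gives $I_\ast(Y_\phi,w_\phi)\neq 0$, and then Proposition \ref{prop:nonvanishingimpliesfaithful} (or its proof) finishes the lemma.
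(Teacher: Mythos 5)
Your framework is the right one (use the pair $L_1,L_2$, reduce to an instanton non-vanishing statement via Theorem \ref{thm:atiyahfloer} and Proposition \ref{prop:nonvanishingimpliesfaithful}, and apply the ``if and only if'' of Proposition \ref{prop:nonvanishinginstanton}), and you correctly flag the key subtlety: since a push map is isotopically trivial after forgetting $p$, one has $Y_\phi\cong Y_{\rm id}=\#^{g-1}S^1\times S^2\#T^3$, so $Y_\phi$ is \emph{never} irreducible and everything hinges on where $[w_\phi]$ lands relative to the essential spheres. But there are two genuine gaps. First, your claimed reduction ``$[\phi]\notin G$ reduces to showing $I_\ast(Y_\phi,w_\phi)\neq 0$'' with the \emph{fixed} pair $(L_1,L_2)$ is false for roughly half the nontrivial classes. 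Concretely, by \eqref{eq:push} the push map dual to $(a_1,b_1,\dots)=(1,0,\dots,0)$ acts by $A_1\mapsto -A_1$, which preserves both $L_1$ and $L_2$ setwise (the descriptions \eqref{eq:lagrangiandescription}--\eqref{eq:lagrangiandescription2} constrain only the $B_i$); hence ${\rm HF}(L_1,\widehat\rho(\phi)(L_2))={\rm HF}(L_1,L_2)=0$ and, by Theorem \ref{thm:atiyahfloer}, $I_\ast(Y_\phi,w_\phi)=0$ for that $\phi$ --- the homology class $[w_\phi]$ differs from $[w_{\rm id}]$ by the class of a loop bounding a compressing disk, which is zero, so $w_\phi$ still pairs oddly with a sphere. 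The paper avoids this by proving the statement for one specific push map (the one dual to $b_1=1$, which acts by $B_1\mapsto -B_1$) and then disposing of all other nontrivial classes by conjugating with a diffeomorphism of $\Sigma_g$, using the normality of $G$ established just before the lemma and the transitivity of the mapping class group action on nonzero classes in $H^1(\Sigma_g;\Z/2)$. You need this reduction step; without it the argument fails for the classes with some $a_i\neq 0$ and all $b_i=0$.

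Second, even for the good class, the central computation is only proposed, not carried out. The paper's proof is shorter than what you outline: since $B_1\mapsto -B_1$ carries $L_1$ exactly onto $L_2$, one gets ${\rm HF}(L_2,\widehat\rho(\phi)(L_1))={\rm HF}(L_2,L_2)\cong I_\ast(Y_{\rm id},u)_{\Sigma_g\cup T}$ where $u=w_1''\cup w_2''$ is the circle factor of $T^3$; this manifestly pairs trivially with every essential sphere of $Y_{\rm id}$, so Proposition \ref{prop:nonvanishinginstanton} gives non-vanishing with no analysis of $Y_\phi$ at all. Your route (identify $Y_\phi\cong Y_{\rm id}$ and track $[w_\phi]$, finding it equal to $[w_{\rm id}]+[A_1]=[u]$) is exactly the ``other viewpoint'' the paper records after its proof and does work, but you should commit to it and do the homology bookkeeping rather than leave open the (false) alternative that $Y_\phi$ is irreducible. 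Also discard the suggestion that $L_1$ and $\widehat\rho(\phi)(L_2)$ intersecting would help: nonempty intersection does not imply nonzero Floer homology.
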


\begin{proof}
Let $\phi$ be the push map which pushes $p\in \Sigma_g$ through the loop corresponding to $A_1$ in the description \eqref{eq:moduli-concrete}. By Poincar\'{e} duality this corresponds to the class $(a_i,b_i)\in(\Z/2)^{2g}\cong H^1(\Sigma_g;\Z/2)$ with $b_1=1$ and all other entries zero. The action of this element on $M_g$ is given by \eqref{eq:push}, induced by changing the sign of $B_1$. In particular, we have $f_\phi(L_1)=L_2$. Invoking Theorem \ref{thm:atiyahfloer}, we have
\begin{equation}\label{eq:pushdiffeo2}
    \text{HF}_\ast(L_2,\widehat{\rho}(\phi)(L_1)) = \text{HF}_\ast(L_2,L_2) \cong I_\ast(Y_{\text{id}},u)_{\Sigma_g\cup T}
\end{equation}
where $u$ is the union of the two arcs $w_1''$ and $w_2''$. In particular, unlike $w_{\text{id}}$, the $1$-manifold $u$ does not contain the circle $w_1''$. Observing $Y_{\text{id}} = \#^{g-1} S^1\times S^2 \# T^3$ and that $u$ is a circle factor of $T^3$ in this decomposition, we have $I_\ast(Y_{\text{id}},u)\neq 0$ by Proposition \ref{prop:nonvanishinginstanton}, and thus \eqref{eq:pushdiffeo2} is non-zero. That $[\phi]\not\in G$ now follows from Proposition \ref{prop:nonvanishingimpliesfaithful}. The proof is completed by noting that any other $\phi$ as in the statement is related to the above one by some diffeomorphism of $\Sigma_g$.
\end{proof}

\vspace{0.15cm}

Here is another viewpoint of the above argument. Let $\phi$ be the push map as in the proof. Then $\phi$ is isotopic to the identity (forgetting the basepoint) by an isotopy which undoes the pushing of the point around the loop corresponding to $A_1$. This induces a diffeomorphism
\begin{equation}\label{eq:pushdiffeo}
  Y_\phi\cong Y_{\text{id}} = \#^{g-1} S^1\times S^2 \# T^3.
\end{equation}
However, under this diffeomorphism, the $1$-manifold $w_\phi$ does not go to the isotopy class of $w_{\text{id}}$. Indeed, While $w_{\text{id}}$ simply passes through $p\in \Sigma_g\subset Y_{\text{id}}$ going from one compression body to the other, $w_\phi$ as viewed in $Y_{\text{id}}$ is isotopic to a curve which is $w_\text{id}$ away from $\Sigma_{g}$ and which at $\Sigma_{g}$ stops at $p$ to go around the loop corresponding to $A_1$. As this latter loop is homologically the same as $w_1'$ in $H_1^\circ$, we see that under \eqref{eq:pushdiffeo}, $[w_\phi]$ goes to $[u]\in H_1(Y_{\text{id}};\Z/2)$, the circle factor of $T^3$.

\vspace{0.15cm}

\begin{lemma}\label{lemma:reduction}
    Assume $g\geq 3$. To establish $G=\{1\}$, it suffices to show that $[\phi]\not\in G$ for all $\phi:(\Sigma_g,p)\to (\Sigma_g,p)$ whose image in ${\rm{Mod}}(\Sigma_g)$ is Torelli and pseudo-Anosov.
\end{lemma}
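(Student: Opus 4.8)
The plan is to leverage normality of $G$ together with two standard structural facts about the mapping class group when $g\geq 3$: the centralizer of the Torelli group $\mathcal{I}_g$ in $\text{Mod}(\Sigma_g)$ is trivial, and every nontrivial normal subgroup of $\text{Mod}(\Sigma_g)$ contains a pseudo-Anosov element. Let $\pi\colon\widehat{\Gamma}_g\to\text{Mod}(\Sigma_g)$ be the surjection from \eqref{eq:theexactsequence} and set $\overline{G}:=\pi(G)$; since $G$ is normal in $\widehat{\Gamma}_g$, the subgroup $\overline{G}$ is normal in $\text{Mod}(\Sigma_g)$. I would assume the stated hypothesis and suppose, toward a contradiction, that $G\neq\{1\}$.

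First I would dispose of the case $\overline{G}=\{1\}$. Then $G$ is contained in $\ker\pi$, which by exactness of \eqref{eq:theexactsequence} is the image of $H^1(\Sigma_g;\Z/2)$ in $\widehat{\Gamma}_g$, and every nontrivial element of this image is represented by a push map (this is the content of the discussion of the Birman sequence \eqref{eq:birmanexactseq} preceding Lemma \ref{lemma:push}). Since $G\neq\{1\}$ it would then contain such a push-map class, contradicting Lemma \ref{lemma:push}. Hence $\overline{G}\neq\{1\}$.

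The crux is to show that $N:=\overline{G}\cap\mathcal{I}_g$ is nontrivial. Suppose $N=\{1\}$. As $\overline{G}$ and $\mathcal{I}_g$ are both normal in $\text{Mod}(\Sigma_g)$, every commutator $[a,b]$ with $a\in\overline{G}$ and $b\in\mathcal{I}_g$ lies in $\overline{G}\cap\mathcal{I}_g=\{1\}$, so $\overline{G}$ centralizes $\mathcal{I}_g$. But for $g\geq 3$ the centralizer of $\mathcal{I}_g$ in $\text{Mod}(\Sigma_g)$ is trivial (equivalently, the conjugation action $\text{Mod}(\Sigma_g)\to\text{Aut}(\mathcal{I}_g)$ is injective), forcing $\overline{G}=\{1\}$, a contradiction. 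Thus $N$ is a nontrivial normal subgroup of $\text{Mod}(\Sigma_g)$; since $g\geq 3$ it contains a pseudo-Anosov element $\psi$, and as $\psi\in\mathcal{I}_g$ this $\psi$ is moreover Torelli.

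To conclude, I would lift $\psi$ through $\pi$: because $\psi\in\overline{G}=\pi(G)$ and $\widehat{\Gamma}_g$ is a quotient of $\text{Mod}(\Sigma_g,p)$, there is a diffeomorphism $\phi\colon(\Sigma_g,p)\to(\Sigma_g,p)$ with $[\phi]\in G$ whose image in $\text{Mod}(\Sigma_g)$ is $\psi$. Then $\phi$ has image in $\text{Mod}(\Sigma_g)$ that is both Torelli and pseudo-Anosov, yet $[\phi]\in G$, contradicting the hypothesis; hence $G=\{1\}$. The main obstacle is really just having the two group-theoretic inputs at hand — triviality of the centralizer $C_{\text{Mod}(\Sigma_g)}(\mathcal{I}_g)$ for $g\geq 3$, and the presence of pseudo-Anosov elements in every nontrivial normal subgroup of $\text{Mod}(\Sigma_g)$ — and citing them precisely; given these, the rest is formal manipulation of the exact sequences \eqref{eq:theexactsequence} and \eqref{eq:birmanexactseq}.
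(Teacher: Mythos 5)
Your argument is correct, and its overall skeleton matches the paper's: pass to the image $K=\pi(G)$ in $\text{Mod}(\Sigma_g)$, use normality plus the hypothesis to force $K=\{1\}$, and then kill $G\cap\ker\pi$ with Lemma \ref{lemma:push} via the identification of $\ker\pi$ with push-map classes. The difference is in how the key group-theoretic fact is obtained. The paper simply cites Clarkson's Proposition 2.3, which states verbatim that for $g\geq 3$ any normal subgroup of $\text{Mod}(\Sigma_g)$ containing no pseudo-Anosov Torelli elements is trivial. You instead reprove this from two inputs: (i) the centralizer of $\mathcal{I}_g$ in $\text{Mod}(\Sigma_g)$ is trivial for $g\geq 3$, applied via the standard observation that two normal subgroups with trivial intersection commute elementwise, and (ii) every nontrivial normal subgroup of $\text{Mod}(\Sigma_g)$ contains a pseudo-Anosov element. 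Both inputs are genuine theorems (for (i) see Farb--Ivanov or the analysis of centralizers of separating twists; for (ii) one needs Ivanov's classification of subgroups together with the fact that for $g\geq 3$ there are no nontrivial finite normal subgroups), so your proof is not more elementary --- it just opens the black box that the paper leaves closed, and in doing so it would require you to supply precise citations for (i) and (ii) rather than the single reference to Clarkson. What your version buys is transparency about exactly which structural properties of $\text{Mod}(\Sigma_g)$ are being used, and it makes visible why the argument degenerates at $g=2$: there the hyperelliptic involution centralizes $\mathcal{I}_2$, which is precisely the exceptional subgroup the paper must handle separately in the proof of Theorem \ref{thm:main}.
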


\begin{proof}
Let $K\subset \text{Mod}(\Sigma_g)$ be the image of $G$ under $\widehat{\Gamma}_g\to \text{Mod}(\Sigma_g)$. Note that $K$ is a normal subgroup of the mapping class group. By \cite[Prop. 2.3]{clarkson}, assuming $g\geq 3$, any normal subgroup of $\text{Mod}(\Sigma_g)$ which contains no pseudo-Ansosov elements of the Torelli group is trivial.

From the exact sequence \eqref{eq:theexactsequence}, we obtain that $G$ is in the image of $H^1(\Sigma_g;\Z/2)\to \widehat{\Gamma}_g$. However, Lemma \ref{lemma:push} shows that the intersection of $G$ with this image is the trivial group.
\end{proof}

\vspace{0.15cm}

A variation on the gluing construction \eqref{eq:ouradmissibledecomposition} yields the $3$-manifold
\begin{equation*}\label{eq:heegaardsplitting}
    Y_\phi' := H_1\cup_{\Sigma_g} H_2
\end{equation*}
which is formed by gluing $\partial H_1\to \partial H_2$ using $\phi$. As $H_1$ and $H_2$ are handlebodies, this construction endows $Y_\phi'$ with a preferred Heegaard splitting. Alternatively, $Y_\phi'$ is the result of cutting $Y_\phi$ along $T$ and on each side filling in the resulting $T$-component of $\partial H_i^\circ$ to obtain $H_i$.

Suppose two surfaces $R$ and $S$ in a $3$-manifold intersect transversely in a circle which bounds a disk $D\subset R$. Take a small regular neighborhood of $D$ identified with $D\times [-1,1]$ for which $S\cap D\times [-1,1]= \partial D\times [-1,1]$. Then {\emph{surgery of $S$ along $D$}} is the operation of replacing $S$ with the surface which is the union of $S\setminus D\times [-1,1]$ and $D\times \{-1,1\}$.

\begin{figure}[t]
\centering
\begin{tikzpicture}
    \node at (0,0) {\includegraphics[scale=1.7]{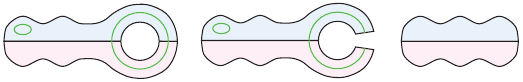}};
    \node at (0.5,1.65) {$Y_\phi^\circ$};
    \node at (-5,1.65) {$Y_\phi$};
    \node at (5.75,1.65) {$Y_\phi'$};
    \draw[gray] (-5.2,-1.2) -- (-5.2,0);
    \node at (-5.1,-1.5) {$\Sigma_g$};
    \draw[gray] (-7.15,0.9) -- (-6.85,0.47);
    \node at (-7.35,1.2) {$w_1'$};
    \draw[gray] (-7.35,-1.2) -- (-6.85,-0.4);
    \node at (-7.35,-1.5) {$H_2^\circ$};
    \draw[gray] (-2.5,0.95) -- (-2.8,0.45);
    \node at (-2.45,1.2) {$w_1''$};
    \draw[gray] (-6.2,0.95) -- (-6.2,0.2);
    \node at (-6.2,1.2) {$H_1^\circ$};
    \draw[gray] (-2.3,-1.3) -- (-2.55,-0.02);
    \node at (-2.25,-1.52) {$T$};
    \draw[gray] (-3.45,-1.25) -- (-3.45,-0.82);
    \node at (-3.45,-1.5) {$w_2''$};
\end{tikzpicture}
\caption{\small Schematic diagrams of $Y_\phi$, $Y_{\phi}^\circ$ and $Y_\phi'$. First, $Y_{\phi}$ is obtained by gluing $H_1^\circ$ to $H_2^\circ$ along $\Sigma_g$ using $\phi$ and along $T$ using the identity. $Y_\phi^\circ$ is obtained from $Y_\phi$ by cutting along $T$. $Y_\phi'$ is obtained from $Y_\phi^\circ$ by filling in the resulting $T$-components; it is the Heegaard splitting formed by the two handlebodies $H_1$ and $H_2$ glued along $\Sigma_g$ via $\phi$. The curves $w_1=w_1'\cup w_1''\subset H_1^\circ$ and $w_2=w_2''\subset H_2^\circ$ are also indicated.}
\label{fig:gluing}
\end{figure}

\vspace{0.15cm}

\begin{lemma}\label{lemma:irreducible}
    Suppose that $\phi$ descends to a mapping class in the Torelli group, and that the Heegaard splitting of $Y_\phi'$ is irreducible. Then $Y_\phi$ is irreducible.
\end{lemma}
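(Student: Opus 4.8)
The plan is to relate reducing spheres in $Y_\phi$ to reducing spheres in $Y_\phi'$, using the fact that $Y_\phi$ is obtained from $Y_\phi'$ by drilling out a knot, namely the image $c$ of the core curve of one of the $1$-handles used to build $H_1^\circ$ (equivalently, $Y_\phi'$ is obtained from $Y_\phi$ by filling in the torus $T$). Concretely, $Y_\phi = Y_\phi' \setminus N(c)$ where $c$ is an embedded curve lying in $H_1\subset Y_\phi'$. So I want to show: if $Y_\phi' \setminus N(c)$ is reducible, then either $Y_\phi'$ is reducible or its Heegaard splitting is reducible; contrapositively, if the Heegaard splitting of $Y_\phi'$ is irreducible then $Y_\phi$ is irreducible. (Note that an irreducible Heegaard splitting in particular forces $Y_\phi'$ itself to be irreducible, as long as it is not $S^2\times S^1$ — and here the Torelli hypothesis will be used to rule out $S^3$ and $S^2\times S^1$, since a Torelli gluing map has $H_1(Y_\phi';\Z)\cong \Z^{2g}$, so $Y_\phi'$ is not a sphere and not $S^2\times S^1$.)

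First I would take an essential sphere $S\subset Y_\phi = Y_\phi'\setminus N(c)$ and cap it off inside $Y_\phi'$ to get a sphere $\widehat S\subset Y_\phi'$ (this is legitimate since $S$, being a sphere, is automatically disjoint from or capped over the solid-torus filling — more precisely, isotope $S$ off a meridian disk of the filling torus and view it in $Y_\phi'$). Then put $\widehat S$ in general position with respect to the Heegaard surface $\Sigma_g\subset Y_\phi'$ and run the standard Haken-type innermost-circle argument: the intersection $\widehat S\cap \Sigma_g$ is a collection of circles, and an innermost one on $\widehat S$ bounds a disk in $\widehat S$ lying in one of the handlebodies $H_i$; since $\widehat S$ is incompressible nonsense doesn't apply, but irreducibility of the Heegaard splitting means that any circle in $\Sigma_g$ bounding a disk in $H_1$ and a disk in $H_2$ must in fact bound disks that together form an inessential sphere, allowing us to surger and reduce $\#(\widehat S\cap \Sigma_g)$. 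Iterating, I either reduce $\widehat S$ to a sphere disjoint from $\Sigma_g$, hence contained in a handlebody and therefore inessential in $Y_\phi'$, or at some stage the surgery disks exhibit a reducing pair for the Heegaard splitting — contradicting the hypothesis. The delicate point in this step is keeping track of the drilled curve $c$: the surgeries on $\widehat S$ must be performed in the complement of $c$ so that the conclusion lands back in $Y_\phi$. Since $c$ lies in the interior of $H_1$ and is isotopic into a $1$-handle, one arranges the compressing disks of $H_1$ used in the argument to miss $c$.

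The main obstacle I expect is precisely this bookkeeping: arranging that the sequence of surgeries simplifying $\widehat S \cap \Sigma_g$ can be carried out disjointly from the knot $c$, so that reducibility of $Y_\phi$ (not merely of $Y_\phi'$) is what gets contradicted. One clean way to handle it is to work entirely inside $Y_\phi$ from the start: $Y_\phi' \setminus N(c)$ has a handle decomposition in which $H_1$ is replaced by the compression body $H_1^\circ$, and there is a ``Heegaard-type'' surface $\Sigma_g\subset Y_\phi$ whose two sides are the compression body $H_1^\circ$ and the handlebody $H_2$; irreducibility of the splitting of $Y_\phi'$ should translate into the statement that no essential circle on $\Sigma_g$ simultaneously bounds a disk in $H_1^\circ$ and in $H_2$, which is exactly what is needed to run the innermost-disk reduction for spheres in $Y_\phi$ directly, avoiding the capping-off step altogether. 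I would also double-check the low-genus and degenerate cases (e.g. ensuring the Torelli hypothesis indeed rules out $Y_\phi'\cong S^2\times S^1$, so that ``irreducible Heegaard splitting'' has its usual force), but these are routine.
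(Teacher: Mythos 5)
Your proposal rests on a misreading of the construction: $Y_\phi$ is \emph{not} $Y_\phi'\setminus N(c)$. Both $H_1^\circ$ and $H_2^\circ$ are compression bodies with two boundary components, $\Sigma_g$ and a torus, and $Y_\phi$ is the \emph{closed} manifold obtained by gluing along both. Equivalently, $Y_\phi$ is obtained from $Y_\phi'$ by drilling out \emph{two} curves, one core in each handlebody, and then identifying the two resulting boundary tori with each other. Thus $T$ is a non-separating torus embedded in the closed manifold $Y_\phi$ (it has to be, since it is a nice surface for the admissible pair), $\Sigma_g$ is likewise non-separating in $Y_\phi$ (its two sides communicate through $T$, so your ``Route B'' picture of a Heegaard-type surface with $H_1^\circ$ on one side and $H_2$ on the other is also not correct), and there is no inclusion $Y_\phi\subset Y_\phi'$. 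Consequently the very first move of your argument --- viewing an essential sphere $S\subset Y_\phi$ inside $Y_\phi'$ --- is not available: $S$ may intersect $T$ essentially, and such an $S$ cannot be transported to $Y_\phi'$ at all. The missing step, which is where the Torelli hypothesis actually does its work, is to show that a reducing sphere can be isotoped off $T$. The paper does this by taking $S$ minimizing the number of components of $S\cap T$, removing circles of intersection that are inessential on $T$ by surgery and a nested-balls argument, and then observing that if $S\cap T$ consists of essential circles, the planar surface $S':=S\setminus N(T)$ inside $Y_\phi^\circ := Y_\phi\setminus N(T)$ can have no disk components because $H_1(T;\Z)\to H_1(Y_\phi^\circ;\Z)$ is injective (this is where Torelli enters); the Euler characteristic count $\chi(S)=\chi(S')\leq 0$ then forces $S\cap T=\emptyset$.

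Two further remarks. First, your use of the Torelli hypothesis (ruling out $S^3$ and $S^2\times S^1$) is not where it is needed: an irreducible Heegaard splitting already forces the ambient manifold to be irreducible by Haken's lemma, with no exceptional cases. Second, once $S$ has been pushed off $T$, so that $S\subset Y_\phi^\circ\subset Y_\phi'$, there is a shortcut in the spirit of what you were aiming for, but the relevant point is homological rather than the surgery bookkeeping you describe: $Y_\phi'$ is irreducible, so $S$ bounds a ball $B\subset Y_\phi'$; since each drilled core curve is non-zero in $H_1(Y_\phi';\Z)$ (again by Torelli), neither can lie in $B$, whence $B\subset Y_\phi^\circ$ and $S$ bounds a ball in $Y_\phi$. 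The paper instead applies Casson--Gordon's version of Haken's lemma to the compression-body splitting of $Y_\phi^\circ$ together with the irreducibility of the splitting of $Y_\phi'$. Either way, the step you cannot skip is making $S$ disjoint from $T$, and that step is absent from your proposal.
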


\begin{proof}
    Suppose $Y_\phi$ is reducible. We first argue that a reducing sphere can be chosen so that it does not intersect $T\subset Y_\phi$. Let $S\subset Y_\phi$ be a reducing sphere for which the number of components of $S \cap T $ is minimal among all reducing spheres which are transverse to $T$. Suppose there is a component $\alpha$ of $S\cap T$ that bounds a disk $D\subset T$. Do surgery of $S$ along $D$ to obtain spheres $S_1$ and $S_2$ each of whose intersection with $T$ has fewer components than did $S$. Suppose each $S_i$ bounds a ball $B_i\subset Y_\phi$. If $B_1\cap B_2=\emptyset$, then gluing them back along $D$ shows that $S$ bounds a ball, a contradiction. Otherwise, without loss of generality, the interior of $B_1$ contains $B_2$. In this case, we can isotope $S$ to remove $\alpha$ from $S\cap T$ by pushing $S\cap S_2$ through $D$, contradicting minimality as well. Thus $S\cap T$ does not contain any component which bounds a disk in $T$.

    It follows that $S\cap T$ consists of a collection of parallel essential simple closed loops in the torus $T$. Let $N(T)\subset Y_{\phi}$ be an open regular neighborhood of $T$ and consider 
    \begin{equation}\label{eq:heegaardsplittingcompression}
        Y_\phi^\circ:= Y_\phi \setminus N(T) \cong H_1^\circ \cup_{\Sigma_g} H_2^\circ
    \end{equation}
    where the gluing along $\Sigma_g$ is by $\phi$. See Figure \ref{fig:gluing}. Then $S':= S\setminus N(T)$ is a planar surface in $Y_\phi^\circ$. Note that $H_1(T;\Z)\to H_1(Y_\phi^\circ;\Z)$ is injective by the construction of $Y_\phi^\circ$ and the assumption that $\phi$ acts trivially on $H_\ast(\Sigma_g;\Z)$. Therefore $S'$ has no disk components. Observe that $S$ is obtained from $S'$ by gluing along its circle boundary components, so $\chi(S)=\chi(S')$; since $S'$ is planar with no disks, $\chi(S')<0$, implying $\chi(S)<0$, a contradiction.

    Thus a reducing sphere $S$ can be chosen such that $S\cap T=\emptyset$, and we may view $S\subset Y^\circ_\phi$. The decomposition \eqref{eq:heegaardsplittingcompression} is a Heegaard splitting of a compact $3$-manifold into two compression bodies in the sense of Casson and Gordon \cite{cg-heegaard}. Their Lemma 1.1, which is a variation on a theorem of Haken, guarantees the existence of a reducing sphere $S$ that intersects $\Sigma_g$ transversely in one circle. But $S$ may also be viewed inside the closed $3$-manifold $Y_\phi'$ with its Heegaard splitting, which by assumption is irreducible. Thus $S\cap \Sigma_g$ must bound a disk on $\Sigma_g$. Do surgery of $S\subset Y^\circ_\phi$ along this disk to obtain two spheres $S_i^\circ\subset H_i^\circ$ where $i\in \{1,2\}$. Since compression bodies are irreducible, each of $S_i^\circ$ bounds a ball in $H_i^\circ$. Gluing these balls back along the surgery disk shows that $S$ bounds a ball, a contradiction. Thus no such reducing sphere exists, and $Y_\phi$ is irreducible.
\end{proof}

\vspace{0.15cm}

\begin{proof}[Proof of Theorem \ref{thm:main-2}]
    Let $\phi:(\Sigma_g,p)\to (\Sigma_g,p)$ be any diffeomorphism that descends to a pseudo-Anosov mapping class in the Torelli group. By Lemma \ref{lemma:reduction} and the assumption $g\geq 3$, to prove the theorem it suffices to show that $[\phi]\not\in G$. 

    We explain how the arguments of \cite{clarkson} adapt to yield a diffeomorphism $\eta$ closely related to $\phi$ for which the Heegaard splitting of $Y_\eta'$ is irreducible. First recall that the curve complex $C(\Sigma_g)$ is the simplicial complex whose vertices are isotopy classes of essential simple closed curves on $\Sigma_g$ and $n$-simplices corresponding to collections of $(n+1)$-many disjoint essential closed curves. A distance function on vertices of $C(\Sigma_g)$ is defined by assigning length $1$ to the edges in $C(\Sigma_g)$.

    The genus $g$ handlebody $H$ determines a set of vertices $\mathcal{V}_H \subset C(\Sigma_g)$ given by the essential simple closed curves that bound disks in $H$. Let $\phi_0$ be a diffeomorphism of $\Sigma_g$. As observed by Hempel, $d(\mathcal{V}_H,\phi_0(\mathcal{V}_H))\geq 1$ if and only if the Heegaard splitting of $Y_{\phi_0}$ is irreducible. Furthermore, by \cite{hempel,abrams-schleimer}, if $\phi_0:\Sigma_g\to \Sigma_g$ is pseudo-Anosov with stable and unstable laminations not in the closure of $\mathcal{V}_H$, all viewed inside the space of projective measured laminations of $\Sigma_g$, then
    \begin{equation}
        \lim_{n\to \infty} d(\mathcal{V}_H, \phi_0^n(\mathcal{V}_H)) = \infty.\label{eq:hempeldistance}
    \end{equation}
    While \eqref{eq:hempeldistance} might not hold for the given $\phi$, the argument of \cite[Lemma 4.7]{clarkson} shows that there exists another pseudo-Anosov $\psi:\Sigma_g\to \Sigma_g$ for which the stable and unstable laminations of $\phi_0:=\psi \phi\psi^{-1}$ are not in the closure of $\mathcal{V}_H$. Then there exists an integer $N>0$ such that
    \[
        \eta := \psi \phi^N\psi^{-1}
    \]
    has $d(\mathcal{V}_H, \eta(\mathcal{V}_H))\geq 1$, and in particular the Heegaard splitting of $Y'_{\eta}$ is irreducible.
    
    Isotope $\psi$ to satisfy $\psi(p)=p$, and form the pair $(Y_\eta,w_\eta)$ using $\eta$. Given that the Heegaard splitting of $Y_\eta'$ is irreducible, Lemma \ref{lemma:irreducible} implies $Y_\eta$ is irreducible, and Proposition \ref{prop:nonvanishinginstanton} yields
    \begin{equation*}\label{eq:nonvanishinginproof}
        I_\ast(Y_\eta,w_\eta)\neq 0.
    \end{equation*}
    From this non-vanishing and Proposition \ref{prop:nonvanishingimpliesfaithful} we obtain 
    $[\eta]\not\in G$. As $\eta = \psi\phi^N\psi^{-1}$ and $G$ is normal, we obtain $[\phi]^N\not\in G$, and hence $[\phi]\not \in G$, completing the proof.
\end{proof}

\vspace{0.15cm}

\begin{proof}[Proof of Theorem \ref{thm:main}]
For $g\geq 3$, Theorem \ref{thm:main} is implied by Theorem \ref{thm:main-2}, and so we may assume $g=2$. By \cite[Prop. 2.3]{clarkson}, in the case $g =2$, any normal subgroup of $\text{Mod}(\Sigma_g)$ which contains no pseudo-Ansosov elements of the Torelli group is either the trivial subgroup or is the order $2$ subgroup consisting of the hyperelliptic involution. Thus when $g=2$, the above arguments imply that $K$, the image of $G$ under $\widehat{\Gamma}_g\to \text{Mod}(\Sigma_g)$, is one of these two subgroups.

Let $G'$ be the kernel of $\widehat{\rho}$ and $K'$ its image in $\text{Mod}(\Sigma_g)$. Then $G'\subset G$ and $K'\subset K$. To show that $K'$ is trivial, we must show that $\widehat{\rho}(\phi) = [f_\phi]\neq 0$ where $f_\phi:M_2\to M_2$ is associated to a hyperelliptic involution $\phi$. Following for example \cite{thaddeus-intro}, there is an isomorphism 
\[
    H^3(M_2;\Z)\cong H_1(\Sigma_2;\Z)
\]
which intertwines the action of $(f_{\phi})_\ast$ on $H^3(M_2;\Z)$ with that of $\phi_\ast$ on $H_1(\Sigma_2;\Z)$. For $\phi$ a hyperelliptic involution, $\phi_\ast$ is negation, and thus $(f_\phi)_\ast$ is not the identity. In particular, $f_\phi$ is not even homotopic to the identity, and consequently $\widehat{\rho}(\phi)=[f_\phi]\neq 0$. Thus $K'$ is trivial.

From the exact sequence \eqref{eq:theexactsequence}, we obtain that $G'$ is contained in the image of $H^1(\Sigma_g;\Z/2)$ in $\smash{\widehat{\Gamma}_g}$. However, as $G$ intersects this image trivially, so too does $G'$. (In fact, Smith already showed in \cite[Lemma 2.9]{smith} that $\widehat{\rho}$ is injective on this image.) Thus $G'=\text{ker}(\widehat{\rho})$ is trivial.
\end{proof} 

\vspace{0.15cm}

\begin{remark}\label{rmk:genus2}
{\normalfont
    In the case $g=2$, the proof of Theorem \ref{thm:main-2} shows that the kernel of $\widehat{\rho}$ maps into an order two subgroup of $\text{Mod}(\Sigma_g)$. As Lemma \ref{lemma:push} implies that the intersection of $\text{ker}(\widehat{\rho})$ with the image of $H^1(\Sigma_g;\Z/2)$ is the trivial subgroup, we obtain that $\text{ker}(\widehat{\rho})$ itself is contained in an order two subgroup of $\widehat{\Gamma}_2$ generated by a hyperelliptic involution that fixes the basepoint $p\in \Sigma_g$.
}
\end{remark}

\section{Further questions}\label{sec:further}

Assume $g\geq 2$. As mentioned in the introduction, the question of whether $\widehat{\rho}$ is injective was already rasied in \cite[Remark 5.6]{dostoglou-salamon}, which also includes the following problem:

\vspace{0.15cm}

\begin{problem} \label{surj}
    Is $\widehat{\rho}:\widehat{\Gamma}_g \to \pi_0 {\rm{Symp}}(M_g)$ surjective? That is to say, is any symplectomorphism of $M_g$ symplectically isotopic to the symplectomorphism induced by an element of $\widehat{\Gamma}_g$?
\end{problem}

\vspace{0.15cm}

A related and perhaps more tractable question is the following. Let $H^\circ$ be the compression body introduced in Section \ref{sec:background} and $w$ be any properly embedded arc connecting the boundary components of $H^\circ$. Let $L=L(H^\circ,w)$ be the associated Lagrangian in $M_g$. Applying $\widehat{\rho}(\phi)$ to $L$ as $\phi$ ranges over $ \smash{\widehat{\Gamma}_g}$ gives a collection of Lagrangians in $M_g$. 
\vspace{0.15cm}

\begin{problem} \label{gen}
    Does the family of Lagrangains 
    $\{\widehat{\rho}(\phi)(L)\}_{\phi\in \widehat{\Gamma}_g}$
    split generate $D^\pi \mathscr F(M_g;0)$?
\end{problem}
\vspace{0.15cm}

\noindent In the case $g=2$, this problem was answered affirmatively in \cite[Lemma 4.15]{smith}. We refer the reader there for the precise definition of $D^\pi \mathscr F(M_g;0)$, which is the summand of the derived Fukaya category $D^\pi \mathscr F(M_g)$ of $M_g$ given by monotone Lagrangians whose potential classes are $0$. The Lagrangian $L$, and hence all the Lagrangians in the above collection, are orientable, spin and monotone with Maslov number $4$. In particular, they have trivial potential class and belong to the summand $\mathscr F(M_g;0)$ of the Fukaya category of $M_g$.

Smith's approach to prove Theorem \ref{thm:main} for $g=2$ is based on the notion of {\it Floer-theoretic entropy}. For a symplectomorphism $\psi:M_g \to M_g$, he defines the Floer-theoretic entropy of $\psi$ as 
\[
  h_{Floer}(\psi):= \limsup{\frac{1}{n}\log {\rm rk}(HF(\psi^n))}
\]
where $HF(\psi^n)$ denotes the fixed Floer homology of the symplectomorphism $\psi$. In the case that $\psi=\widehat{\rho}(\phi)$ for $[\phi]\in \text{Mod}(\Sigma_g,p)$, this Floer homology group is isomorphic to the instanton Floer homology of $(M_\phi,w_\phi)$, the mapping torus of $\phi$ \cite{DS:AF-cyl}. The following is implicit in \cite{smith}:
\vspace{0.15cm}

\begin{problem} \label{surj}
    Let $\phi \in \widehat{\Gamma}_g$ with $g\geq 2$ have a pseudo-Anosov component. Does $\widehat{\rho}(\phi)$ have strictly positive Floer theoretic entropy?
\end{problem}

\vspace{0.15cm}

\noindent Smith shows that a positive answer to this question implies that $\widehat{\rho}$ is faithful. Furthermore, he shows the answer to this question is positive when $g=2$. Our proof of Theorem \ref{thm:main} does not immediately say anything about the entropy of $\widehat{\rho}(\phi)$. 

We may consider similar problems of moduli spaces for other principal $G$-bundles over $\Sigma_g$. For example, denote by $M_g^{n,d}$ the moduli space of flat connections on some $U(n)$-bundle $P\to \Sigma_g$ with fixed determinant connection, where $c_1(P)[\Sigma_g]=d$. This moduli space has the description
\begin{equation*}\label{eq:higherrankreps}
    M_g^{n,d} = \{(A_1,\ldots,A_g,B_1,\ldots,B_g) \in SU(n)^{2g} : [A_1,B_1]\cdots [A_g,B_g]= \zeta^d I\}/SU(n)
\end{equation*}
where $\zeta=e^{2\pi\sqrt{-1} /n}$ and the action is by conjugation. The moduli space $M_g$ studied in this paper is $M_g^{2,1}$. In general, when $n$ and $d$ are coprime, $M_{g}^{n,d}$ is a smooth symplectic manifold of dimension $M_g^{n,d}=2(n^2-1)(g-1)$, and diffeomorphisms $\Sigma_g\to\Sigma_g$ preserving a basepoint naturally induce symplectomorphisms as before. The diffeomorphisms induced by push maps from $\pi_1(\Sigma_g,p)$ in \eqref{eq:birmanexactseq} factor through an action of $(a_i,b_i)\in H^1(\Sigma_g;\Z/n)=(\Z/n)^{2g}$ that sends $(A_i,B_i)$ to $(\zeta^{a_i}A_i,\zeta^{b_i}B_i)$. Therefore there is an extension generalizing \eqref{eq:theexactsequence},
\begin{equation*}\label{eq:theexactsequencerankn}
    1\to H^1(\Sigma_g;\Z/n) \to \widehat{\Gamma}^{n,d}_g \to \text{Mod}(\Sigma_g) \to 1,
\end{equation*}
and an induced homomorphism $\widehat{\rho}$ from $\widehat{\Gamma}^{n,d}_g$ to the symplectic mapping class group of $M_g^{n,d}$. 

\vspace{0.15cm}

\begin{conjecture}\label{conj:rankn}
    For $(n,d)$ coprime, $\widehat{\rho}:\widehat{\Gamma}^{n,d}_g \to \pi_0 {\rm{Symp}}(M^{n,d}_g)$ is injective.
\end{conjecture}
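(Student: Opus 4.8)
\vspace{0.15cm}

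\noindent We outline a possible approach to Conjecture \ref{conj:rankn}. The natural plan is to run the proofs of Theorems \ref{thm:main} and \ref{thm:main-2} with the structure group $SO(3)=PU(2)$ replaced by $PU(n)$. The topological backbone of those arguments is insensitive to the rank: the compression body $H^\circ$ and the $3$-manifolds $Y_\phi$, $Y_\phi'$; the Birman exact sequence \eqref{eq:birmanexactseq}; Clarkson's result that a normal subgroup of $\text{Mod}(\Sigma_g)$ containing no pseudo-Anosov Torelli element is trivial for $g\geq 3$ \cite[Prop. 2.3]{clarkson}; the curve-complex distance estimates of \cite{hempel,abrams-schleimer} used to arrange an irreducible Heegaard splitting; and the reducing-sphere analysis of Lemma \ref{lemma:irreducible} should all carry over with only cosmetic changes once the relevant Floer-theoretic inputs are in place. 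So the task reduces to supplying the two rank-$n$ analogues of the instanton-theoretic ingredients used in Section \ref{sec:proof}.

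First, one would need a version of $PU(n)$ instanton Floer homology $I_\ast^{(n)}(Y,w)$ attached to a closed oriented $3$-manifold $Y$ together with bundle data $w$ that is \emph{admissible} in a suitable sense: a $PU(n)$-bundle whose obstruction class in $H^2(Y;\Z/n)$ is Poincar\'{e} dual to a $\Z/n$-weighted $1$-manifold $w$, together with a surface pairing nontrivially mod $n$ with $w$ so as to exclude reducible flat connections. For such data one wants the non-vanishing statement that $I_\ast^{(n)}(Y,w)\neq 0$ whenever $Y$ is irreducible, generalizing Proposition \ref{prop:nonvanishinginstanton}. Second, one would need the corresponding Atiyah--Floer isomorphism: for an admissible splitting of $(Y,w)$ along $\Sigma\cup T$, an identification of $I_\ast^{(n)}(Y,w)_\Sigma$ with the Lagrangian Floer homology $\text{HF}_\ast(L_1,L_2)$ of the Lagrangians $L(Y_i,w_i)$ inside the rank-$n$ moduli space, generalizing Theorem \ref{thm:atiyahfloer}. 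Granting these, the argument of Section \ref{sec:proof} goes through essentially verbatim: for a diffeomorphism $\phi$ of $(\Sigma_g,p)$ with pseudo-Anosov Torelli image one conjugates and iterates to a diffeomorphism $\eta$ for which the Heegaard splitting of $Y_\eta'$ is irreducible, deduces irreducibility of $Y_\eta$ from the analogue of Lemma \ref{lemma:irreducible}, applies the non-vanishing theorem and the Atiyah--Floer isomorphism to conclude $\text{HF}(L,\widehat{\rho}(\eta)(L'))\not\cong\text{HF}(L,L')$ for suitable simply-connected Lagrangians (the analogues of the $L_i$ are diffeomorphic to $SU(n)^{g-1}$, which is simply connected), and then concludes as in Proposition \ref{prop:nonvanishingimpliesfaithful} together with normality of the analogous ``trivial action'' subgroup $G\subset\widehat{\Gamma}^{n,d}_g$ and Clarkson's normal-subgroup theorem. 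The contribution of $H^1(\Sigma_g;\Z/n)$ is handled as in Lemma \ref{lemma:push}: the push map around the $A_1$-loop acts by $B_1\mapsto \zeta B_1$, which moves the Lagrangian $L_1$ off of itself; topologically this replaces a circle component of $w$ by an arc in a connected sum of copies of $S^1\times S^2$ with $T^3$, for which the $PU(n)$ instanton homology is nonzero because $w$ meets an $S^1$-factor nontrivially mod $n$. For $g=2$, the hyperelliptic involution is detected, as in the proof of Theorem \ref{thm:main}, by its action as negation on a subspace of $H^\ast(M^{n,d}_2;\Q)$ isomorphic to $H_1(\Sigma_2;\Q)$ arising from the K\"{u}nneth components of the Chern classes of a universal bundle, via the computations of \cite{atiyah-bott,thaddeus-intro}.

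The main obstacle is the construction of the higher-rank instanton Floer homology with the non-vanishing property, and, more severely, the proof of the corresponding Atiyah--Floer isomorphism. For $PU(n)$ with $n>2$ the analytic foundations of the theory --- transversality for the anti-self-duality equation, control of bubbling, and the structure of the partially-reducible locus --- are significantly more delicate than in the $SU(2)$ case, and we are not aware of a non-vanishing theorem for admissible $PU(n)$-bundles over irreducible $3$-manifolds at the level of generality needed here; establishing one would presumably require a higher-rank sutured instanton homology in the spirit of \cite{km-sutures} together with a taut-foliation input. The Atiyah--Floer step is harder still, as the neck-stretching analysis of \cite{dfl} would have to be re-developed with structure group $PU(n)$, including the identification of the space of flat connections on $\Sigma$ with the rank-$n$ moduli space and the perturbation theory making the relevant immersed Lagrangians embedded. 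By contrast, the purely $3$-dimensional and mapping-class-group inputs are expected to transfer essentially unchanged, so the conjecture ought to follow once higher-rank instanton Floer theory and its Atiyah--Floer correspondence are developed to the standard reached in the rank $2$ case.
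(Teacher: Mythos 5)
The statement you are addressing is a \emph{conjecture}: the paper does not prove it, and offers only a closing paragraph indicating that the strategy used for $(n,d)=(2,1)$ is expected to adapt. Your outline reproduces that paragraph almost point for point. You correctly identify that the $3$-manifold topology (the compression bodies, Lemma \ref{lemma:irreducible}, the curve-complex argument), the Clarkson normal-subgroup input, and the overall architecture of Section \ref{sec:proof} are rank-independent, and that the two genuinely missing ingredients are a non-vanishing theorem for admissible higher-rank instanton homology over irreducible $3$-manifolds (the analogue of Proposition \ref{prop:nonvanishinginstanton}) and a higher-rank Atiyah--Floer isomorphism (the analogue of Theorem \ref{thm:atiyahfloer}). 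The paper's own commentary says exactly this, adding two facts you understate: the relevant $U(n)$ instanton homology groups are already available from Kronheimer and Mrowka \cite{yaft}, and the non-vanishing statement is already known for $n=3$ by \cite{dis}, so your concern that a higher-rank sutured theory would have to be built from scratch is, at least in that case, already addressed.

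As a proof, however, the proposal has a decisive gap --- the same one the authors acknowledge by labelling the statement a conjecture. Every step of your second paragraph is conditional on the two Floer-theoretic inputs, and neither is established at the generality required; you are candid about this, but it means what you have written is a research program rather than an argument. If you pursue it, two smaller points deserve explicit verification beyond what you mention: first, the identification of $M(\Sigma_g\cup T,\{p,t\})$ with $M_g^{n,d}$ rests on the genus-one moduli space for the relevant bundle data being a single point, which for rank $n$ amounts to the uniqueness up to conjugation of irreducible solutions of $[A,B]=\zeta^d I$ in $SU(n)^2$ when $\gcd(n,d)=1$; second, the push-map computation of Lemma \ref{lemma:push} must be redone with the $\Z/n$ weights, since the extension is now by $H^1(\Sigma_g;\Z/n)$ and one must check that every nontrivial class (not just those of order $2$) moves some Lagrangian $L(H^\circ,w)$ to one with a different instanton-theoretic invariant.
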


\vspace{0.15cm}

\noindent The authors expect that the strategy of this paper used for the case $(n,d)=(2,1)$ may be adapted to prove the more general Conjecture \ref{conj:rankn}. The relevant $U(n)$ instanton homology groups are available from work of Kronheimer and Mrowka \cite{yaft}. The remaining essential ingredients needed for the strategy are an analogue of Proposition \ref{prop:nonvanishinginstanton}, which already follows in the case $n=3$ from \cite{dis}, and a higher rank analogue of the Atiyah--Floer Conjecture result of Theorem \ref{thm:atiyahfloer}, which the authors expect is within reach by the methods already utilized in the $U(2)$ case.

The above problems may also be formulated in the cases in which $(n,d)$ are not coprime. For simplicity we focus our attention on the particular case of the $SU(2)$ moduli space
\[
    M_g^{2,0} = \text{Hom}(\pi_1(\Sigma_g),SU(2))/SU(2).
\]
In this case, the mapping class group of $\Sigma_g$ itself acts on $M_g^{2,0}$. It would be interesting to see to what extent the analogue of $\widehat{\rho}$ in this setting is injective. A problem along these lines for $M_g^{2,0}$ was proposed by Robert Lipshitz for the upcoming K3 problem list, an update to Kirby's problem list \cite{Kirby}. As $M_g^{2,0}$ is not smooth, to formulate this problem one must decide on a suitable definition of the symplectic mapping class group. For example, the natural symplectic form is still defined on the dense smooth stratum of irreducibles in $M_g^{2,0}$ and a diffeomorphism may be considered symplectic if it preserves the symplectic form on this stratum. We remark that for $g=2$, the work of Ruberman \cite{ruberman-mutation} implies that the hyperelliptic involution acts trivially on $M_2^{2,0}$.

Alternatively, we may view the action of the mapping class group on the $SU(2)$ character variety $M_g^{2,0}$ in terms of {\it extended moduli spaces} introduced in \cite{Jeffrey:extended,Huebschmann:extended}. The representation theoretic definition of the extended moduli space corresponding to $M_g^{2,0}$ is given as 
\[
  N_g:=\{(\rho,\zeta) \; : \; \rho: \pi_1(\Sigma_g\setminus p)\to SU(2),\, \zeta\in \mathfrak{su}(2),\; \rho \text{ is a homomorphism},\, \rho(\mu)=\exp(2\pi \zeta)\},
\]
where $\mu$ is a small loop around $p$. Let $N_g^{<r}$ denote the subspace of $N_g$ given by the pairs $(\rho,\zeta)$ with $|\zeta|<r$, where we use the convention that $|{\rm diag}(i,-i)|=1/4$. For $r\leq 1$, the space $N_g^{<r}$ is a smooth manifold. Further, there is a closed 2-form on $N_g^{<r}$ which is a symplectic form for $r\leq 1/2$. Simultaneously conjugating $\rho$ and $\zeta$ defines an action of $SU(2)$ on $N_g$ which factors through $SO(3)$. For $r=1/2$, this action on $N_g^{<r}$ is Hamiltonian and the moment map $\Phi: N_g^{<r} \to \mathfrak{su}(2)$ is given by projecting $(\rho,\zeta)$ to $\zeta$. From this description, it is clear that the symplectic quotient
\[
    N_g^{<r} /\!/SO(3):=\Phi^{-1}(0)/SO(3)
\]
for any $r\leq 1/2$ is homeomorphic to $M_g^{2,0}$ and this identification is a symplectomorphism away from the singular locus of $M_g^{2,0}$. One can then reformulate questions related to the singular space $M_g^{2,0}$ to equivariant questions involving $N_g^{<r}$. For instance, a slight variation of $N_g^{<r}$ was used to formulate the Atiyah--Floer conjecture for $SU(2)$-bundles \cite{MW:ext-Floer,DF:proc,Caz:eq-Lag}. 

Analogous to the case of the odd character variety, any $[\phi]\in \text{Mod}(\Sigma_g,p)$ determines a homeomorphism of $N_g$, which induces a symplectomorphism of $N_g^{<r}$ for $r\leq 1/2$. Any such symplectomorphism is equivariant with respect to the $SO(3)$ action on $N_g^{<r}$. As one possible formulation of the above faithfulness problem, it would be interesting to identify the kernel of the above homomorphism from $\text{Mod}(\Sigma_g,p)$ to the equivariant symplectic mapping class group of $N_g^{<r}$.

There is yet another family of character varieties that may be considered. For any odd positive integer $k$, let $\pi$ be a set of $k$ points in $\Sigma_g$, and define
\[
  M_{g,k}:=\{\rho:\pi_1(\Sigma_{g}\setminus \pi) \to SU(2) \; : \; \rho \text{ is a homomorphism},\; \text{Tr$\rho(\mu)=0$ }\}/SU(2)
\]
where $\mu$ runs over all small meridional loops around the elements of $\pi$. This is again a smooth Fano variety, which can be identified with a moduli space of parabolic bundles as well as a moduli space 
 of orbifold flat connections. Let $\text{Mod}(\Sigma_g,\pi)$ be the mapping class group of diffeomorphisms of $\Sigma_g$ that map $\pi$ to itself as a set. This gives rise to a homomorphism 
 \begin{equation}\label{eq:rhohat-orb}
\widehat{\rho}: \text{Mod}(\Sigma_g,\pi) \to \pi_0 \text{Symp}(M_{g,k}).
\end{equation}

\vspace{0.15cm}

\begin{conjecture}\label{conj:parb}
    Let $(g,k)$ be a pair of non-negative integers such that $k$
    is odd and $(g,k)\notin \{(0,1), (0,3), (1,1)\}$.
    Then the homomorphism $\widehat{\rho}: {\rm{Mod}}(\Sigma_g,\pi) \to \pi_0 {\rm{Symp}}(M_{g,k})$ is injective.
\end{conjecture}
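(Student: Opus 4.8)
\vspace{0.15cm}

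\noindent\textbf{A strategy for Conjecture \ref{conj:parb}.} The plan is to run the argument of Section \ref{sec:proof} with $M_g$ replaced by $M_{g,k}$ and with instanton Floer homology of admissible $SO(3)$-bundles replaced by Kronheimer and Mrowka's instanton homology of links carrying cone-angle $\pi$ singularity data \cite{km-sutures,km-unknot}; write $I_\ast(Y,K,\omega)$ for this invariant, where $K$ is the singular link and $\omega$ an auxiliary loop carrying $SO(3)$-twisting data. For $\phi\in\mathrm{Mod}(\Sigma_g,\pi)$ one first builds, in parallel with \eqref{eq:ouradmissibledecomposition}, such a triple $(Y_\phi,K_\phi,\omega)$ equipped with an admissible splitting whose cutting surface has two components: a genus $g$ surface with $k$ marked points, whose parabolic moduli space is $M_{g,k}$, and an auxiliary piece --- e.g.\ a thrice-punctured sphere, whose parabolic moduli space is a point --- included so that the parity constraints hold since $k$ is odd. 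Concretely, $Y_\phi$ is glued from two compression bodies along the marked genus $g$ surface by a representative of $\phi$ preserving $\pi$, the link $K_\phi$ meets that surface transversely exactly in $\pi$, and $\omega$ is a core loop on one side, playing the role of $w_1'$ in Section \ref{sec:background} and arranged so that the two resulting Lagrangians are disjoint when $\phi=\mathrm{id}$. The pieces determine, after a small perturbation, Lagrangians $L_1,L_2\subset M_{g,k}$; since $\pi_1$ of the marked surface surjects onto $\pi_1$ of each side's link complement the unperturbed $L_i$ are embedded, one checks they are simply-connected, and hence monotone ($M_{g,k}$ being Fano and, like $M_g$, simply connected).

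The analytic inputs are the analogues of Propositions \ref{prop:nonvanishinginstanton} and \ref{prop:nonvanishingimpliesfaithful} and of Theorem \ref{thm:atiyahfloer}. First, a non-vanishing theorem: $I_\ast(Y,K,\omega)$ vanishes precisely when $(Y,K)$ splits off an $S^1\times S^2$ summand in which a $2$-sphere meets $K\cup\omega$ oddly, so in particular $I_\ast(Y,K,\omega)\neq 0$ whenever $Y\setminus K$ is irreducible; this should follow from the sutured singular-instanton machinery of Kronheimer and Mrowka \cite{km-sutures,km-unknot} exactly as Proposition \ref{prop:nonvanishinginstanton} does in the unbranched case. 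Second, a parabolic Atiyah--Floer isomorphism $I_\ast(Y,K,\omega)_{\Sigma}\cong\mathrm{HF}_\ast(L_1,L_2)$ in $M_{g,k}$, for which the extended moduli space formulation of the $SU(2)$ Atiyah--Floer conjecture supplies the natural framework and which I would expect to prove by the methods of \cite{dfl}. Given these, the analogue of Proposition \ref{prop:nonvanishingimpliesfaithful} is immediate: $I_\ast(Y_\phi,K_\phi,\omega)\neq 0$ forces $\mathrm{HF}(L_1,\widehat\rho(\phi)(L_2))\neq 0=\mathrm{HF}(L_1,L_2)$, so $\widehat\rho(\phi)\neq[\mathrm{id}]$.

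Finally one runs the Clarkson-type reduction. Let $G\subset\mathrm{Mod}(\Sigma_g,\pi)$ consist of the $\phi$ with $\mathrm{HF}(L,\widehat\rho(\phi)(L'))\cong\mathrm{HF}(L,L')$ for all simply-connected $L,L'\subset M_{g,k}$; as in Section \ref{sec:proof} this is a normal subgroup and the goal is $G=\{1\}$. For $g\ge 3$: if the image $K$ of $G$ in $\mathrm{Mod}(\Sigma_g)$ were nontrivial, then by \cite[Prop. 2.3]{clarkson} it contains a pseudo-Anosov Torelli element, so some $\phi'\in G$ has pseudo-Anosov Torelli image; passing as in the proof of Theorem \ref{thm:main-2} to $\eta=\psi(\phi')^N\psi^{-1}$ with the associated genus $g$ Heegaard splitting of positive Hempel distance, the analogue of Lemma \ref{lemma:irreducible} makes $Y_\eta\setminus K_\eta$ irreducible, whence $I_\ast(Y_\eta,K_\eta,\omega)\neq 0$ and $\eta\notin G$, contradicting normality. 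So $G$ lies in the surface-braid kernel $B$ of the point-pushing sequence generalizing \eqref{eq:birmanexactseq}, and one must show $G\cap B=\{1\}$: a push of a point of $\pi$ around a homologically essential loop is detected by the non-vanishing theorem as in Lemma \ref{lemma:push} (the push drags $K_\phi$ inside $Y_{\mathrm{id}}$ so that its $\mathbb{Z}/2$-homology class changes and it no longer pairs oddly with the separating sphere associated to $\omega$), a point-swap is detected by its nontrivial action on $H^\ast(M_{g,k})$ as in the $g=2$ case of the proof of Theorem \ref{thm:main}, and the remaining pure-braid elements must be handled by comparing the full groups $I_\ast(Y_{\mathrm{id}},K_{\mathrm{id}},\omega)$ and $I_\ast(Y_{\mathrm{id}},K_\phi,\omega)$ under clasp-type modifications. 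The cases $g\in\{0,1,2\}$ are treated separately, combining the classification of normal subgroups of $\mathrm{Mod}(\Sigma_g)$ (and, for $g=0,1$, of the relevant punctured-surface mapping class groups) containing no pseudo-Anosov Torelli elements with direct verifications that the finitely many exceptional classes (hyperelliptic or elliptic involutions) act nontrivially on $M_{g,k}$; the excluded pairs $(0,1)$, $(0,3)$, $(1,1)$ are exactly those for which $M_{g,k}$ is empty, a point, or carries such an exceptional class acting trivially.

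I expect the principal difficulty to be twofold. The new analytic content is the parabolic Atiyah--Floer isomorphism, whose proof requires redoing the degeneration and gluing analysis of \cite{dfl} in the orbifold setting. On the topological side, the difficulty is concentrated in the surface-braid subgroup $G\cap B$ and the low-genus cases: Clarkson's normal-subgroup dichotomy has no direct analogue for the surface braid group, whose lower central series terms yield many $\mathrm{Mod}(\Sigma_g,\pi)$-invariant normal subgroups, and the crude non-vanishing criterion of Proposition \ref{prop:nonvanishinginstanton} does not detect clasp moves; establishing that $B$ acts faithfully through $\widehat\rho$ thus seems to require finer properties of singular instanton homology and a close study of the $\mathrm{Mod}(\Sigma_g,\pi)$-action on $M_{g,k}$.
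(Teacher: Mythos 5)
The statement you are addressing is Conjecture \ref{conj:parb}, which the paper does not prove: the authors only remark that they ``expect'' an analogue of Theorem \ref{thm:atiyahfloer} for the parabolic moduli spaces $M_{g,k}$ and Kronheimer--Mrowka's singular instanton homology, and that the strategy of the paper ``may be adapted.'' Your proposal is essentially a fleshed-out version of that same suggested strategy, and as a roadmap it is consistent with what the authors have in mind. But it is not a proof, and you are candid about this; let me isolate where the genuine gaps are, since some are more serious than others.

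The two analytic inputs you flag --- the non-vanishing theorem for singular instanton homology and the parabolic Atiyah--Floer isomorphism --- are real open ingredients, but they are the kind of thing one can reasonably expect to follow from existing technology (the sutured machinery of \cite{km-sutures} and the methods of \cite{dfl}, respectively), and the authors say as much. The more fundamental gap is structural, and your own last paragraph correctly identifies it without resolving it: the reduction via Clarkson's dichotomy only kills the image of the putative kernel in ${\rm{Mod}}(\Sigma_g)$, leaving the point-pushing kernel of the Birman sequence. In the unmarked case this kernel is manageable precisely because the action of $\pi_1(\Sigma_g,p)$ on $M_g$ factors through $H^1(\Sigma_g;\Z/2)$, so the paper only needs Lemma \ref{lemma:push} on a finite group; here the conjecture asserts faithfulness of the \emph{full} group ${\rm{Mod}}(\Sigma_g,\pi)$, whose point-pushing subgroup is an infinite surface braid group on $k$ strands that does not factor through homology. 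Your proposed treatment of this subgroup (``comparing \dots under clasp-type modifications'') is not an argument, and the crude irreducibility/non-vanishing criterion genuinely cannot distinguish the relevant pairs, since the underlying $3$-manifolds and $\Z/2$-homology classes of the links can coincide. Detecting these elements would require either a finer Floer-theoretic invariant or a different topological reduction; until that is supplied, the proposal establishes at most that the kernel of $\widehat{\rho}$ maps into the braid kernel, not that it is trivial. The low-genus and small-$k$ cases, and the verification that the excluded pairs $(0,1),(0,3),(1,1)$ are exactly the degenerate ones, are also asserted rather than carried out.
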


\vspace{0.15cm}

\noindent The faithfullness of $\widehat{\rho}$ in \eqref{eq:rhohat-orb} for $(g,d)=(0,5)$ is established in \cite{S:Lec-Dehn}. The authors expect that an analogue of Theorem \ref{thm:atiyahfloer} may be proved, involving the moduli spaces $M_{g,k}$ and Kronheimer and Mrowka's singular instanton homology for links \cite{yaft}, and furthermore that the strategy of this paper may be adapted to address Conjecture \ref{conj:parb}.

\bibliographystyle{acm}
\bibliography{references}

\Addresses

\end{document}